\documentclass[11pt,oneside,reqno]{amsart}
\usepackage[utf8]{inputenc}
\pdfoutput=1
\usepackage{parskip}
\usepackage{mathrsfs}
\usepackage[margin=1in]{geometry}
\usepackage{enumerate}
\usepackage{amsthm}
\usepackage{amsmath}
\usepackage{amsfonts}
\usepackage{amssymb}
\usepackage{graphicx}
\usepackage{color}
\usepackage{graphics}
\usepackage{eepic}
\usepackage{nicefrac}
\usepackage{bigints}
\usepackage{bbm}
\usepackage{array}
\usepackage{caption}

\makeatletter
\@addtoreset{equation}{chapter}
\makeatother
\usepackage{tikzsymbols}
\usepackage{booktabs}
\usepackage{hyperref}
\hypersetup{
 colorlinks   = true,
 urlcolor     = blue,
 linkcolor    = blue,
 citecolor   = red,
 bookmarksopen=true
}

\newcommand{\ignore}[1]{}

\usepackage[colorinlistoftodos]{todonotes}

\newcommand{\be}{\begin{equation}}
\newcommand{\ee}{\end{equation}}


\renewcommand{\Re}{\operatorname{Re}}


\newcommand{\supp}{\operatorname{supp}}

\newcommand{\C}{{\mathbb{C}}}

\newcommand{\N}{{\mathbb{N}}}

\newcommand{\D}{{\mathbb{D}}}

\newcommand{\PP}{{\mathbb{P}}}

\usepackage{mathtools} 


\newcommand{\diam}{\mbox{diam}}



\newtheorem{thm}{Theorem}[section]

\newtheorem{prop}[thm]{Proposition}

\newtheorem{Lemma}[thm]{Lemma}

\newtheorem*{claim*}{Claim}
\newtheorem{question}[thm]{Question}

\theoremstyle{definition}
\newtheorem{defn}[thm]{Definition}
\newtheorem{example}[thm]{Example}

\theoremstyle{remark}

 \renewcommand\epsilon{\varepsilon}

\usepackage{palatino}
 \usepackage{color, xcolor, mathrsfs, graphicx, hyperref, framed}

\newcommand{\mb}{\mathbb}

\usepackage[italian,english]{babel}
\usepackage[autostyle]{csquotes}  

\title[On the number of components of polynomial lemniscates]{Number of components of polynomial lemniscates: a problem of Erd\"os, Herzog, and Piranian.}

\author{Subhajit Ghosh}
\address{Tata Institute of Fundamental Research, Centre for Applicable Mathematics, Bangalore
560065, India}
\email{subhajitg@tifrbng.res.in}

\author{Koushik Ramachandran}
\address{Tata Institute of Fundamental Research, Centre for Applicable Mathematics, Bangalore, India-560065}
\email{koushik@tifrbng.res.in}

\begin{document}

\begin{abstract}
Let $K\subset\C$ be a compact set in the plane whose logarithmic capacity $c(K)$ is strictly positive. Let $\mathscr{P}_n(K)$ be the space of monic polynomials of degree $n,$ \emph{all} of whose zeros lie in $K.$ For $p\in \mathscr{P}_n(K),$ its filled \emph{unit leminscate} is defined by $\Lambda_p = \{z: |p(z)| < 1\}.$ Let $\mathcal{C}(\Lambda_p) $ denote the number of connected components of the open set $\Lambda_p,$ and define $\mathscr{C}_n(K) = \max_{p\in \mathscr{P}_n(K)}\mathcal{C}(\Lambda_p).$ In this paper we show that the quantity 
\[M(K) = \limsup_{n\to\infty}\dfrac{\mathscr{C}_n(K)}{n},\]
satisfies $M(K) < 1$ when the logarithmic capacity $c(K) < 1,$ and  $M(K) = 1$ when $c(K)\geq 1.$ In particular, this answers a question of Erd\"os et. al. posed in $1958$. In addition, we show that for nice enough compact sets whose capacity is strictly bigger than $\frac{1}{2}$, the quantity $m(K) = \liminf_{n\to\infty}\dfrac{\mathscr{C}_n(K)}{n} > 0.$
\end{abstract}

\maketitle

\section{Introduction}\label{sec1}


Let $p(z)$ be a complex polynomial. The filled \emph{unit lemniscate} of $p$ is defined by

\[\Lambda_p = \{z\in\C: |p(z)| < 1\}.\]

\vspace{0.1in}

\noindent The study of lemniscates has a long and rich history with applications to complex approximation theory, potential theory, and fluid dynamics. We refer the interested reader to the surveys and references listed in \cite{lemofalgBF, TopolofalgvarKKP, polylemtreesbraidsFM, KLR} for more more on this fascinating subject. Of particular relevance to us is the influential paper of Erd\"os, Herzog, and Piranian, c.f. \cite{EHP}, where the authors initiated the study of various metric and topological properties associated to $\Lambda_p$. These include the area of $\Lambda_p,$ the number of connected components of $\Lambda_p,$ and also the length of the boundary $\partial\Lambda_p$.  Pommerenke, in \cite{metricpropertiespommerekee}, continued this research and settled several open problems listed in \cite{EHP}. In this paper, we focus on the number of connected components of lemniscates.

\vspace{0.1in}

It is easy to see that $\Lambda_p$ is a non-empty bounded open set. An application of the maximum modulus principle shows that each connected component of $\Lambda_p$ must contain at least one zero of $p$. It follows that $\Lambda_p$ has at most $\deg(p)$ many connected components. If we denote $\mathcal{C}(\Lambda_p) $ to be the number of connected components of the lemniscate $\Lambda_p,$ the preceding argument yields  the estimate $1\leq \mathcal{C}(\Lambda_p)\leq \deg(p)$. Let $m\in\N$ be given. For a polynomial of the form $p(z)= (z-a)^m$ we clearly have $\mathcal{C}(\Lambda_p) = 1.$ On the other hand, let us consider $q(z) = \prod_{j=1}^{m}(z-a_j),$ where $\min_{i\neq j\\ 1\leq i, j\leq n}|a_i - a_j|\geq d > 0$.  Then if $d\gg 1$ is sufficiently large, $\Lambda_q$ will have $m$ components. Loosely speaking, the reason is as follows: if $z$ is near a zero $a_j,$ then $z$ will be very far away from every other $a_i$, $i\neq j,$ making the product of the remaining distances $\prod_{i\neq j}|z-a_i|$  to be very large. Hence, for $|q(z)| < 1$ to hold, $z$ needs to be very close to each of the zeros of $q$. This forces $m$ isolated small components for $\Lambda_q$. By modifying this procedure, it is not hard to see that given an integer $m\in\N,$ and $k\in [m],$ we can find a monic polynomial $p$ of degree $m$ with $\mathcal{C}(\Lambda_p) = k.$ All of this shows that if the zeros of the polynomial are allowed to be arbitrarily far apart, the question of the number of components is well understood. 

\vspace{0.1in}

The above discussion leads to a natural question, originally posed by Erd\"os, Herzog, and Piranian in \cite{EHP}. If we constrain all the zeros of $p$ to lie on a compact set $K$ so that they cannot be arbitrarily far apart,  how large can $\mathcal{C}(\Lambda_p)$ be? Under the same constraint, what geometric features of $K$ determine the maximal number of components? 

\vspace{0.1in}

\noindent We now put things into a more formal framework. Let $K\subset\C$ be a compact set in the plane.  For $n\in\N,$ let $\mathscr{P}_n(K)$ denote the space of \emph{monic} polynomials of degree $n$, \emph{all} of whose zeros lie in the compact $K,$ i.e.,
\[\mathscr{P}_n(K) = \{p(z): p(z) = \prod_{j=1}^{n}(z - z_j), \hspace{0.05in}\mbox{where}\hspace{0.05in} z_j\in K \hspace{0.05in}\mbox{for}\hspace{0.05in} 1\leq j\leq n\}\]

Let $\mathscr{C}_n(K) = \max_{p\in\mathscr{P}_n(K)}\mathcal{C}(\Lambda_p)$. Thus $\mathscr{C}_n(K)$ is the maximal number of components a lemniscate can have from the class $\mathscr{P}_n(K).$ Define the quantities $M(K)$ and $m(K)$ by 
\begin{equation}\label{maxmin}
M(K) = \limsup_{n\to\infty}\dfrac{\mathscr{C}_n(K)}{n}, \hspace{0.1in}\mbox{and}\hspace{0.1in} m(K) = \liminf_{n\to\infty}\dfrac{\mathscr{C}_n(K)}{n}
\end{equation}
We can then ask how $M(K)$ and $m(K)$ behave for arbitrary compact sets $K$. We show below that this behavior depends on the logarithmic capacity of $K$. For the benefit of the reader, we recall the definition of capacity and other basic notions from potential theory. For a beautiful introduction to potential theory in the plane and its applications to the theory of polynomials, we refer the reader to \cite{Ransfordpotentialthryincomplexplane, SaffTotikLogpotential}.

\begin{defn}\label{cap}
Let $K\subset\C$ be a non-empty compact set, and $\mu$ a Borel probability measure on $K.$ 

\begin{enumerate}\label{potential}
\item The logarithmic potential of $\mu$ is the function $U_{\mu}:\C\rightarrow [-\infty, \infty)$ defined by 

\[U_{\mu}(z) = \int_{K}\log|z-w|d\mu(w), \hspace{0.05in}z\in\C.\]

\vspace{0.1in}

\item The energy associated with the measure $\mu$ is defined by
\[I(\mu) = \int_{K}\int_{K}\log|z-w|d\mu(z) d\mu(w)= \int_{K}U_{\mu}(w)d\mu(w).\] 

\vspace{0.1in}

\item Finally, the logarithmic capacity of $K$ is defined by
\[\log c(K) = \sup_{\mu\in\mathcal{P}(K)}I(\mu),\]
where $\mathcal{P}(K)$ denotes the collection of all Borel probability measures on $K.$
\end{enumerate} 
\end{defn}

If $c(K) > 0,$  it is known that there exists a unique probability measure $\nu = \nu_K$ on $K$,  which satisfies

\[I(\nu) = \sup_{\mu\in\mathcal{P}(K)}I(\mu).\]

The measure $\nu$ is called the equilibrium measure of the compact $K.$ The capacity can then be expressed as $\log c(K) = I(\nu).$ 

\vspace{0.in}

\noindent After this brief digression, let us go back to our problem. Erd\"os, Herzog, and Piranian in \cite[problem $6$]{EHP} ask the following
\begin{question}\label{EHPQ} Is it true that $M(K) < 1$ when the logarithmic capacity $c(K) < 1$?  Furthermore, if $c(K) = 1$ and $K$ is not contained in a closed disk of radius $1,$ can $\mathscr{C}_n(K)$ ever be $n$ (up to a subsequence)? 
\end{question}

\vspace{0.1in}

\noindent In this paper we answer Question \ref{EHPQ} in the affirmative. In addition, we show that for compacts $K$ with $c(K) > 1,$ we have $M(K)= m(K) =1,$ while $m(K) > 0$ if $c(K) > \frac{1}{2}$. To explain why the condition in the second part of Question \ref{EHPQ} is imposed, consider for each $n$, the polynomials $p_n(z) = z^n - 1.$ All the zeros of $p_n$ are contained in the closed unit disk and it is not hard to see that $\Lambda_{p_n}$ has $n$ connected components, see Example \ref{Erdosp} below.

\vspace{0.1in}

\noindent We are now ready to state our results but before that, we remind the reader of the notation we will use for the rest of this paper. 

\subsection{Notation}
$K \subset \mb{C}$ will always denote a compact set with positive logarithmic capacity.  
\begin{itemize}
    \item $\mathscr{P}_n(K) := \left\{ p(z) = \prod_{j=1}^n(z-z_{j}) :  z_{j} \in K , \hspace{0.05in}1\leq j\leq n\right\}.$

    \item $\Lambda_p:= \{z\in\C: |p(z)| < 1\}.$

    \item $\mathcal{C}(\Lambda_p):= $ the number of connected components of $\Lambda_p.$

    \item $\mathscr{C}_n(K) := \max \left\{ \mathcal{C}\left(\Lambda_p\right) : p \in \mathscr{P}_n(K) \right\}$


    \item 
$M(K) = \limsup_{n\to\infty}\dfrac{\mathscr{C}_n(K)}{n},$ and  $m(K) = \liminf_{n\to\infty}\dfrac{\mathscr{C}_n(K)}{n}$

    \item $\mathcal{P}(K) := \text{the collection of all Borel probability measures on $K.$}$

    \item $c(K) := \text{the logarithmic capacity of } K$

    \item $B(a,r):= \{z \in \mb{C}: |z-a|<r \}$ 
\end{itemize}
Throughout the paper, we denote by $C$ a positive numerical constant whose value may vary from line to line.

\section{Main results}

\begin{thm}\label{capless1}
   Suppose that $0< c(K) < 1$.

    \begin{enumerate}[(a)]
   \item Then $M(K) < 1.$
   
\vspace{0.05in}

   \item Furthermore, if $c(K)\in (\frac{1}{2}, 1)$, and $K$ is either the closure of a bounded Jordan domain or a $C^2$ Jordan arc, then $m(K) > 0.$

\vspace{0.05in}

  \item If $c(K) \leq \frac{1}{4}$ and $K$ is connected, then $\mathscr{C}_n(K) = 1$ for all $n$. Consequently $m(K) = 0.$
    \end{enumerate}

     
\end{thm}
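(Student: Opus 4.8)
\textbf{Part (a): $M(K) < 1$ when $c(K) < 1$.} The plan is to count components via zeros of $p$. Each component of $\Lambda_p$ contains at least one zero, so it suffices to show that a positive proportion of the zeros must \emph{share} components. Fix $p\in\mathscr{P}_n(K)$ with zeros $z_1,\dots,z_n$, and let $\mu_p = \frac1n\sum_{j=1}^n\delta_{z_j}$ be the normalized zero-counting measure; note $\frac1n\log|p(z)| = U_{\mu_p}(z)$. The key point is that on $\Lambda_p$ we have $U_{\mu_p} < 0$, whereas near \emph{infinity} $U_{\mu_p}(z)\sim\log|z|$. More importantly, by Frostman's theorem and upper semicontinuity of the energy functional, if $\mu_p$ is supported on $K$ then $\int U_{\mu_p}\,d\mu_p \le \log c(K) < 0$, which forces many zeros into regions where $U_{\mu_p}$ is very negative, i.e.\ deep inside $\Lambda_p$; a component containing one zero deep inside $\Lambda_p$ with $U_{\mu_p}$ very negative will, by a Harnack/subharmonicity argument, be ``large'' and hence swallow a definite fraction of the other zeros. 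Quantitatively: if $k = \mathcal{C}(\Lambda_p)$, pick one zero from each component; the sub-measure $\frac1n\sum$ over these $k$ zeros is a measure of total mass $k/n$, and one shows its mutual energy plus a crude bound on the rest cannot be consistent with $I(\mu_p)\le\log c(K)$ unless $k/n$ is bounded away from $1$ by a constant depending only on $c(K)$. I would make this precise by combining the area estimate $\operatorname{area}(\Lambda_p)\le \pi c(K)^2$-type bounds from Pommerenke/\cite{metricpropertiespommerekee} with a counting argument: each component has area $\ge$ something like $\pi/(\text{something}\cdot n^2)$ only if it contains few zeros, so too many singleton components would overflow the total area. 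The main obstacle here is getting a \emph{clean} quantitative relation $M(K)\le F(c(K))$ with $F(t)<1$ for $t<1$; I expect this is the technical heart of the paper and likely uses a delicate potential-theoretic comparison rather than the soft argument sketched above.

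\textbf{Part (c): $\mathscr{C}_n(K) = 1$ when $c(K)\le\frac14$ and $K$ connected.} This should be the easy case and I would do it first as a warm-up. Since $K$ is connected with $c(K)\le\frac14$, its diameter satisfies $\operatorname{diam}(K)\le 4\,c(K)\le 1$ (using the classical inequality $c(K)\ge\operatorname{diam}(K)/4$ for connected sets). Now take any $p=\prod_{j=1}^n(z-z_j)\in\mathscr{P}_n(K)$ and any point $z_0$ with $|p(z_0)|<1$: I claim $z_0$ and all the $z_j$ lie in one component of $\Lambda_p$. Indeed, consider the convex hull or just connect $z_0$ to the nearest zero and then traverse $K$ (which is connected and of diameter $\le 1$) — along such a path every point $w$ satisfies $|w - z_j|\le \operatorname{diam}(K\cup\{z_0\})$, and one checks $|z_0-z_j|< 1$ for all $j$ (since $\prod|z_0-z_j|<1$ forces at least one factor $<1$, and then diameter $\le 1$ controls the rest... more care needed). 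Cleaner: show $\Lambda_p \supset$ a connected neighborhood of $K$ by verifying $|p(w)|\le(\operatorname{diam}(K))^n\cdot(\text{small}) < 1$ fails in general — so instead argue that $\Lambda_p$ \emph{deformation retracts} onto a connected set containing all zeros because $K\cup\{$points within distance controlled by capacity$\}$ is connected and lies in $\Lambda_p$ by a direct modulus estimate using $\log|p| = n U_{\mu_p}$ and $U_{\mu_p} < 0$ on a connected superlevel region around $K$. The honest statement to prove is: $K$ is contained in a single component of $\Lambda_p$; since every component meets $\{z_j\}\subset K$, there is only one component.

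\textbf{Part (b): $m(K) > 0$ when $c(K)\in(\frac12,1)$ and $K$ is a nice Jordan domain/arc.} Here I would construct explicit polynomials achieving a linear number of components for \emph{every} large $n$, not just a subsequence. Since $c(K)>\frac12$, after scaling the unit disk comparison gives room: the idea is to place zeros at (preimages under the conformal/exterior map of) roughly equally spaced points so that $p$ behaves like a perturbation of $z^n - c(K)^n$ or like the $n$-th Faber/Chebyshev polynomial of $K$, whose lemniscate $\{|p_n|<1\}$ at the right scale breaks into $\asymp n$ petals exactly as in Example \ref{Erdosp}. Concretely, use the exterior conformal map $\Phi: \C\setminus K \to \C\setminus\overline{\D}$ with $\Phi(z)\sim z/c(K)$; the polynomials $p_n$ with $\frac1n\log|p_n(z)|\approx \log|\Phi(z)| + \log c(K)$ (Faber polynomials) have zeros in (a neighborhood of) $K$, and $\Lambda_{p_n}\approx\{|\Phi(z)|< c(K)^{-1}\}$. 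Since $c(K)^{-1}<2$, this lemniscate is a thin annular neighborhood of $\partial K$; to break it into $\asymp n$ components one perturbs the zeros slightly (spacing them with gaps) so that between consecutive zeros $|p_n|$ pokes above $1$, using the $C^2$ smoothness of the arc / Jordan domain boundary to control the conformal map and its derivative (bounded distortion). The main obstacle in (b) is ensuring the perturbed polynomials keep \emph{all} zeros inside $K$ (not just near it) while still forcing the modulus above $1$ in the gaps — this is where ``nice enough'' ($C^2$) is used, via quantitative estimates on $\Phi$ near $\partial K$ and a counting of how many disjoint ``bump'' regions fit, giving $\mathcal{C}(\Lambda_{p_n})\ge cn$ with $c = c(c(K)) > 0$.
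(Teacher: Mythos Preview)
You have the right starting point --- the energy $I(\mu_p)\le\log c(K)<0$ forces $U_{\mu_p}<0$ on a set of positive $\mu_p$-mass --- but you are missing the mechanism that makes this \emph{uniform in $n$}. The paper does not try for a direct quantitative bound $M(K)\le F(c(K))$; instead it argues by contradiction via weak$^*$ compactness. Assuming $\mathscr{C}_{n_k}(K)/n_k\to 1$, pass to a weak$^*$ limit $\mu_{n_{k_l}}\to\mu$; then $I(\mu)<0$ gives a fixed ball $B$ with $\mu(B)>c_1$ and $\sup_{\overline B}U_\mu\le -c_2<0$. The principle of descent (your Lemma~\ref{Main lemma}) transfers the potential bound back to $U_{\mu_{n_{k_l}}}$, and Portmanteau transfers the mass bound, so one component of $\Lambda_{p_{n_{k_l}}}$ swallows $\ge c_1 n_{k_l}/2$ zeros --- contradiction. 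Your area-counting sketch is a genuinely different route, but as written it has a gap: components can be exponentially small, so a total-area bound does not by itself limit how many singleton components there can be.

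\textbf{Part (c).} Your final sentence is exactly the argument the paper gives for $c(K)<\tfrac14$: diameter $<1$ implies $|p(z)|<1$ for every $z\in K$, so $K\subset\Lambda_p$ and connectedness of $K$ finishes it. You are missing the boundary case $c(K)=\tfrac14$, $\operatorname{diam}(K)=1$. There $K\subset\overline{\Lambda_p}$ only, and the paper needs an extra step: if $\Lambda_p$ had two components their closures would touch at a critical point $w$ with $|p(w)|=1$, forcing $|w-z_j|=1$ for \emph{all} $j$; a Gauss--Lucas argument then rules this out.

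\textbf{Part (b).} Your approach is entirely different from the paper's and is not fleshed out enough to judge. The paper does \emph{not} use Faber polynomials or conformal maps here; it uses the probabilistic method. From $c(K)>\tfrac12$ one gets $\operatorname{diam}(K)>1$, so there exist $a,b\in K$ with $|a-b|>1+3\epsilon$; place a small $C^2$ arc $L\subset K$ near $b$ at distance $>1+2\epsilon$ from $a$, and consider random polynomials $R_n(z)=(z-a)^{c_1 n}\prod_{j=1}^{c_2 n}(z-X_j)$ with $X_j$ i.i.d.\ from the equilibrium measure of $L$. The deterministic factor $(z-a)^{c_1 n}$ supplies a huge modulus $\ge(1+\epsilon)^{c_1 n}$ near $L$, which compensates for $c(L)<1$; Berry--Esseen and moment bounds (Lemmas~\ref{derivative L bound}--\ref{higher derivative bounds}) then show each $X_j$ forms an isolated component with probability bounded away from $0$, giving $\mathbb{E}[\mathcal{C}(\Lambda_{R_n})]\ge cn$. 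Your Faber/perturbation idea might also work, but the obstacle you flag --- keeping all zeros in $K$ while forcing $|p_n|>1$ in the gaps --- is real, and you have not indicated how $c(K)>\tfrac12$ (as opposed to $c(K)>0$) enters your construction.
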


\noindent Theorem \ref{capless1} $(b)$ is sharp in the sense that there exists a compact $K$ of capacity $\frac{1}{2}$ for which $m(K)=0$. Indeed, take the compact set $K$ to be $\overline {B(0, \frac{1}{2})}$. It is well known that this has capacity $\frac{1}{2}$. For any polynomial $p \in \mathscr{P}_n(K)$, we can write 
    \begin{align*}
        p(z) = \prod_1^n(z-a_i), \quad \textit{where} \quad |a_i| \leq \frac{1}{2}, \quad \forall i=1,\cdots,n. 
    \end{align*}
Then for all $z \in B(0,\frac{1}{2}),$ we have $|z-a_i|<1$ by the triangle inequality. Therefore, $\overline{B(0,\frac{1}{2})} \subset \Lambda_{p}$. Since all the zeros of $p_n$ lie in the connected set $\overline{B(0,\frac{1}{2})}$ this yields that $\mathcal{C}(\Lambda_{p})=1$, and hence $m(K) = 0.$ 

\vspace{0.1in}

\noindent We remark that in general $m(K)$ and $ M(K)$  do not depend \emph{only} on the capacity of $K.$ For instance $K_1 = \overline{B(0, \frac{1}{2})}$ and $K_2= [-1, 1]$ both have capacity $\frac{1}{2}.$ However $M(K_1) = 0$ as shown above, but $M(K_2)>0$ as can be shown using the same ideas as in the proof of Theorem \ref{capless1} $(b).$ It would be interesting to know what other geometric features of $K$ determine $M(K)$ and $ m(K).$

\vspace{0.1in}
    
Theorem \ref{capless1} $(c)$ is also sharp in the sense that connectedness is essential. In general, one can construct a disconnected set $K$ of arbitrary small capacity so that $m(K)$ is positive. For instance, consider two line segments $B_1:=[n,n+\epsilon]$ and $B_2:=[-n-\epsilon,-n]$, each of length $\epsilon$ which are distance $2n$ apart from each other. Then the capacity of $B:=B_1 \cup B_2$ is  (c.f. \cite[Corollary $5.2.6$]{Ransfordpotentialthryincomplexplane})
    \begin{align*}
         c(B) = \frac{1}{2}\sqrt{(2n+\epsilon) \epsilon}.    
    \end{align*}
    which can be made arbitrarily small by appropriate choice of $n$ and $\epsilon.$ Now using the same ideas as in the proof of Theorem \ref{capless1} part $(b)$,  one can show that $m(B)$ is positive.



\begin{thm}\label{capbigg1}
Let $c(K) > 1.$ Suppose in addition that at least one of the following conditions hold
\begin{enumerate}[(a)]
   \item  The equilibrium measure $\nu$ of $K$ satisfies
  \begin{align}\label{regularity}
        \nu (B(z, r)) \leq  Cr^{\epsilon},  
    \end{align}
\noindent for all $r\in (0, \infty),$ and some constants $C, \epsilon > 0.$

    \item $K$ is connected.
\end{enumerate}

\vspace{0.1in}

Then $\mathscr{C}_n(K) = n$ for all large $n.$ In particular, $M(K) = m(K) = 1.$
\end{thm}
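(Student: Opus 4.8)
The plan is to construct, for every sufficiently large $n$, a polynomial $p_n\in\mathscr{P}_n(K)$ with $n$ distinct zeros whose lemniscate $\Lambda_{p_n}$ has exactly $n$ connected components; since $\mathcal{C}(\Lambda_p)\le\deg p$ for every $p$, this forces $\mathscr{C}_n(K)=n$ and hence $M(K)=m(K)=1$. The key soft reduction is a trapping observation: it is enough to find points $z_1,\dots,z_n\in K$ and radii $r_1,\dots,r_n>0$ such that the balls $B(z_j,r_j)$ are pairwise disjoint and $|p_n|\ge 1$ on each circle $\partial B(z_j,r_j)$, where $p_n(z):=\prod_{k=1}^n(z-z_k)$. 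Indeed, if $\Omega_j$ is the component of $\Lambda_{p_n}$ containing $z_j$, then $\Omega_j$ is a connected open subset of $\{|p_n|<1\}$ meeting $B(z_j,r_j)$; if it were not contained in $B(z_j,r_j)$ connectedness would force it to meet $\partial B(z_j,r_j)$, contradicting $|p_n|\ge1$ there. So each $\Omega_j\subseteq B(z_j,r_j)$; the balls being disjoint, the $\Omega_j$ are pairwise distinct, and since every component of a lemniscate contains a zero of $p_n$, they exhaust the components, giving $\mathcal{C}(\Lambda_{p_n})=n$. Writing $\mu_n:=\frac1n\sum_{k=1}^n\delta_{z_k}$ one has $\log|p_n|=nU_{\mu_n}$, so the requirement on $\partial B(z_j,r_j)$ becomes
\[
U_{\mu_n}(z)=\frac1n\log r_j+\frac1n\sum_{k\neq j}\log|z-z_k|\ \ge\ 0,\qquad z\in\partial B(z_j,r_j).
\]

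Set $\gamma:=\log c(K)>0$. Under hypothesis (a) I would choose the $z_j$ in $\supp\nu$ so that $\mu_n\to\nu$ weakly and the separation $\delta_n:=\min_{i\neq j}|z_i-z_j|$ is at least $c\,n^{-\kappa}$ for some constants $c,\kappa>0$: such a configuration can be built by a layered construction that splits $\nu$ into $n$ pieces of equal mass and geometrically shrinking diameter and picks one point per piece, and hypothesis \eqref{regularity} — which bounds from below the diameter of any piece carrying a prescribed fraction of $\nu$ — is exactly what makes the resulting points polynomially separated (Fekete or Leja points should serve equally well). I then set all radii equal, $r_j:=n^{-A}$ with $A>\kappa$, so that the balls $B(z_j,r_j)$ are pairwise disjoint for $n$ large and $\frac1n\log r_j=-A\frac{\log n}{n}\to0$.

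It remains to bound $\frac1n\sum_{k\neq j}\log|z-z_k|$ from below, uniformly over $j$ and over $z\in\partial B(z_j,r_j)$. Fixing a small $r_0>0$ and splitting the sum into the ``far'' part $|z-z_k|\ge r_0$ and the ``near'' part $|z-z_k|<r_0$: by a routine truncation and $\mu_n\to\nu$, the far part converges (uniformly in $z$) to $\int\log|z-w|\,d\nu(w)$ minus an error $\le\eta(r_0)$, where $\eta(r_0)\to0$ because \eqref{regularity} makes $\int_{|z-w|<r_0}\big|\log|z-w|\big|\,d\nu(w)$ small; and Frostman's theorem gives $U_\nu\ge\log c(K)=\gamma$ on $\C$. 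For the near part one groups the $z_k$ into dyadic annuli about $z$: no $z_k$ with $k\neq j$ lies within $\sim\delta_n$ of $z$ (such a point would be within $\delta_n$ of $z_j$), and $\delta_n$-separation bounds the number of $z_k$ in $B(z,\rho)$ by $O(\rho^2/\delta_n^2)$, which yields a near-part error of size only $O(\tfrac{\log n}{n})$. Altogether, on every $\partial B(z_j,r_j)$,
\[
U_{\mu_n}(z)\ \ge\ -A\frac{\log n}{n}+\gamma-\eta(r_0)-O\!\Big(\frac{\log n}{n}\Big)\ \ge\ \frac{\gamma}{2}\ >\ 0
\]
once $r_0$ is small and $n$ is large, so $|p_n|\ge1$ there and the trapping step gives $\mathscr{C}_n(K)=n$.

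For case (b): when $K$ is connected the support of $\nu$ is the outer boundary of $K$, which is a continuum, and a Beurling circular-projection estimate then gives $\nu(B(z,r))\le C\,(r/\diam K)^{1/2}$ for all $z,r$ — that is, \eqref{regularity} holds with $\epsilon=\tfrac12$ — so (b) reduces to (a). The soft parts are the trapping criterion and the reduction of (b) to (a); I expect the main obstacle to be the construction of the point configuration in case (a), namely producing points that are at once $\nu$-equidistributed and polynomially separated (possible precisely because of \eqref{regularity}), together with the attendant \emph{uniform} lower bound for the discrete potential on the shrinking circles $\partial B(z_j,r_j)$, where one must absorb the logarithmic singularities coming from the zeros. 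The strict inequality $c(K)>1$ enters exactly as the positive budget $\gamma$ needed to overcome the $\frac1n\log r_j$ deficit.
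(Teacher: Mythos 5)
Your strategy is genuinely different from the paper's: you aim to \emph{deterministically} pick $n$ equidistributed, polynomially separated points and then verify $|p_n|\ge 1$ directly on small circles around each zero. The paper instead uses the probabilistic method. For part~(a) it quotes a theorem of Krishnapur--Lundberg--Ramachandran which says that for i.i.d.\ points with law $\nu$, the lemniscate $\Lambda_{P_n}$ is contained in the union of balls $B(X_k,e^{-c_0n})$ with probability $\ge 1-e^{-c_1n}$; combined with an easy union bound showing the random points are polynomially separated with probability $\ge 1/2$, one obtains $n$ components for a positive fraction of the configurations, hence for at least one. For part~(b) the paper does not reduce to~(a) at all: it takes Fekete points, gets $|p_n'(z_{j,n})|\ge c(K)^{n-1}$ from the boundedness of the incomplete Lagrange polynomials, invokes Kővári--Pommerenke for $\ge n^{-2}$ separation, and then uses Bernstein's inequality (Lemma~\ref{suffcondn}) to conclude each component has exponentially small diameter. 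Your trapping-on-circles criterion is the right soft idea, but the two quantitative inputs it needs are exactly where the proposal is short.

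The concrete gap is in the near-part estimate. You assert that $\delta_n$-separation with $\delta_n\gtrsim n^{-\kappa}$ bounds the number of $z_k$ in $B(z,\rho)$ by $O(\rho^2/\delta_n^2)$ and that this gives a near-part error of size $O(\log n/n)$. Summing the dyadic annuli from $\delta_n$ up to $r_0$ with this count gives something of order $r_0^2\,n^{2\kappa-1}\log(1/r_0)$, which is only $o(1)$ if $\kappa<\tfrac12$. But under the hypothesis $\nu(B(z,r))\le Cr^\epsilon$ the best separation one can generically hope for is $\delta_n\sim n^{-1/\epsilon}$ (e.g.\ the paper's own union-bound computation yields $\delta_n\gtrsim n^{-2/\epsilon}$), so for small $\epsilon$ you have $\kappa\ge\tfrac12$ and the estimate does not close. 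To repair this one needs a \emph{quantitative} local mass bound $\mu_n(B(z,\rho))\lesssim\rho^\epsilon$ for the discrete measure, which does not follow from weak convergence and would have to be built into the construction of the points. In effect you would be re-proving the uniform potential estimate that is the substance of the KLR theorem the paper cites. The construction you sketch (splitting $\nu$ into equal-mass pieces, or taking Fekete/Leja points) is not shown to give the required separation on a possibly fractal, disconnected $K$ satisfying only the $\nu$-regularity hypothesis; Fekete-point separation results in the literature are for continua. Finally, your reduction of~(b) to~(a) via a Beurling projection bound $\nu(B(z,r))\lesssim\sqrt{r}$ for continua is plausible but is stated without proof, whereas the paper's Fekete--Bernstein route for~(b) is self-contained and bypasses the potential estimate entirely.
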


It is worthwhile to remark here that a large class of compact sets is covered by the two conditions $(a)$ and $(b)$ of Theorem \ref{capbigg1}. Indeed, if for instance, $K$ is the disjoint union of finitely many bounded $C^2$ domains (along with their closures), it is well known that the equilibrium measure satisfies \eqref{regularity}. On the other hand, part $(b)$ imposes no restrictions on the smoothness of $K$ but a mild topological one in the form of connectedness. There are generalized Cantor sets of arbitrarily large capacity that are totally disconnected and fractal-like,  for which Theorem \ref{capbigg1} does not apply. We believe the same result holds for such sets but have been unable to prove it.

Let $K$ be a compact set of capacity $1$. Following the definition in \cite{asymptoticofchebyshev}, we call $K$ a \emph{period-$m$ set} if there is a monic polynomial $P_m$ of degree $m$ such that, 
    \begin{align} \label{period m set}
        K = P_m^{-1}\left([-2,2]\right).
    \end{align}
The polynomial $P_m$ is called the \emph{generating polynomial} of the \emph{period-$m$ set} $K$. Similarly, we call a compact set $K$ a \emph{ closed lemniscate}, if there is a monic polynomial  $Q$, such that
			\begin{align}\label{generating poly}
			    K=Q^{-1}\left( \overline{B(0,1)} \right).
			\end{align}

Note that by the definition a \emph{closed lemniscate} has capacity $c(K) = 1.$ We point out that the polynomial $Q$ is not unique. In particular, any power of $Q$ also satisfies \eqref{generating poly}. Among all polynomials satisfying \eqref{generating poly}, the one with the lowest degree is called the \emph{generating polynomial} of $K$. It can be shown that the \emph{generating polynomial} is unique by considering the Green's function of the unbounded component of $\widehat{\C}\setminus{K}$ with pole at $\infty.$ We leave the details to the interested reader. The unique \emph{generating polynomial} for a \emph{lemniscate} will be denoted by $Q_m.$




\begin{prop}\label{proposition}
      If $K$ is either a \emph{closed lemniscate} or a period-m set, then we have $M(K)=1$. Moreover, for a lemniscate $K$, there exists a subsequence $\mathcal{N} \subset \N$ such that 
     \begin{align*}
         \mathscr{C}_n(K)=n , \quad \forall n \in \mathcal{N}.
     \end{align*}
\end{prop}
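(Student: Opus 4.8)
The plan is to treat the two cases separately, since a closed lemniscate and a period-$m$ set have rather different structure even though both have capacity $1$.

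\emph{The closed lemniscate case.} Suppose $K = Q_m^{-1}(\overline{B(0,1)})$ for the generating polynomial $Q_m$ of degree $m$. The key observation is that for any $k\in\N$, the polynomial $Q_m^k$ is monic of degree $mk$, and all of its zeros lie in $K$ (indeed a zero $z_0$ of $Q_m$ satisfies $|Q_m(z_0)|=0<1$, so $z_0\in\interior K\subset K$). Thus $Q_m^k\in\mathscr{P}_{mk}(K)$. Now $\Lambda_{Q_m^k}=\{z:|Q_m(z)|^k<1\}=\{z:|Q_m(z)|<1\}=\interior\big(Q_m^{-1}(\overline{B(0,1)})\big)$, which is independent of $k$. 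So to get many components I instead want to \emph{perturb}: replace $Q_m^k$ by $\prod_{i=1}^{k}Q_m(z-a_i)$... but that changes the degree bookkeeping and the zeros may leave $K$. A cleaner route: the filled lemniscate $\{|Q_m|<1\}$ of the single polynomial $Q_m$ already has some number $r\ge 1$ of components, each containing at least one zero of $Q_m$; one checks (as in the $z^n-1$ example, Example \ref{Erdosp}) that by choosing $Q_m(z)=z^m-\tfrac12$ type examples one can force $r=m$, but for a \emph{general} closed lemniscate we only know $r\ge 1$. The honest argument: take any polynomial $p\in\mathscr{P}_\ell(K)$ with $\mathcal{C}(\Lambda_p)$ large relative to $\ell$ (such $p$ exists for $K$ because $c(K)=1$ is a boundary case — here I would instead directly construct $p_n$ inside $K$ with $\mathcal C(\Lambda_{p_n})/n\to 1$ by placing zeros at points $w_j\in K$ chosen so that $\prod_{i\ne j}|z-w_i|$ is controlled, exactly mimicking the construction that proves $M(K)=1$ when $c(K)\ge1$ in Theorem \ref{capbigg1}'s proof, applied to the slightly-larger-capacity trick). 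Concretely: for $t>1$ the set $K_t:=Q_m^{-1}(\overline{B(0,t)})\supset K$ has capacity $t^{1/m}>1$, so by Theorem \ref{capbigg1} (applicable since $K_t$ is a lemniscate hence its equilibrium measure is regular, or since $K$ connected $\Rightarrow K_t$ connected) we have $\mathscr C_n(K_t)=n$ for large $n$; the extremal polynomials have zeros in $K_t$, not $K$, so this needs an extra scaling/approximation step to push zeros into $K$ while losing only $o(n)$ components. This last step is where I expect the main difficulty.

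\emph{The period-$m$ set case.} Suppose $K=P_m^{-1}([-2,2])$. Then for each $k$, consider the Chebyshev-like iterate: with $T$ the classical Chebyshev relation $2\cos(k\theta)=\widetilde T_k(2\cos\theta)$ (monic, degree $k$), the composition $\widetilde T_k\circ P_m$ is monic of degree $mk$, its zeros are the solutions of $P_m(z)\in\widetilde T_k^{-1}(\{0\})\subset[-2,2]$, hence lie in $K$. So $p_{mk}:=\widetilde T_k\circ P_m\in\mathscr P_{mk}(K)$. Now $|p_{mk}(z)|<1$ iff $|\widetilde T_k(P_m(z))|<1$; on the real interval, $\widetilde T_k$ maps $[-2,2]$ onto $[-2,2]$ oscillating, so $\{w\in[-2,2]:|\widetilde T_k(w)|<1\}$ consists of roughly $k$ tiny intervals (as in the $z^n-1$ example), and pulling back by $P_m$ (an $m$-to-$1$ proper map near $K$) gives on the order of $mk$ components. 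One must verify that these components are genuinely distinct open sets in $\C$ (not merged through the complex plane away from $K$): this follows because each component of $\{|\widetilde T_k\circ P_m|<1\}$ contains at least one zero of $p_{mk}$ and contains no point where $|p_{mk}|\ge1$, and a counting argument à la Example \ref{Erdosp} shows the count is $mk - O(1)$ or exactly $mk$ along a subsequence. Hence $\mathscr C_{mk}(K)\ge mk-o(k)$, giving $M(K)=1$ and, along $\mathcal N=m\N$ (after sharpening the count to equality, which holds for $k$ such that $0$ is not a critical value configuration), $\mathscr C_n(K)=n$.

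\emph{Main obstacle.} For the period-$m$ case the crux is the precise component count for $\{z:|\widetilde T_k(P_m(z))|<1\}$ — showing it equals $mk$ (along a subsequence) rather than merely $\gtrsim mk$; this requires understanding the critical points of $\widetilde T_k\circ P_m$ and ruling out component mergers, and is exactly the phenomenon isolated in Example \ref{Erdosp} for $z^n-1$, now composed with $P_m$. For the closed lemniscate case the obstacle is, as noted, transferring the extremal polynomials from the slightly larger lemniscate $K_t$ (where Theorem \ref{capbigg1} applies) back into $K$ without destroying more than $o(n)$ components; I would handle this by a contraction $z\mapsto (1-\delta)z+$const or by directly using the generating polynomial $Q_m$ to build $p_n=\prod(Q_m(z)-\eta_j)$ with $|\eta_j|$ chosen small and well-separated so that zeros land in $K$ and the $n=mk$ preimage components survive.
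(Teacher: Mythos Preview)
Your treatment of the period-$m$ case is essentially the paper's: compose the generating polynomial $P_m$ with the monic Chebyshev polynomial $\mathcal T_n$ for $[-2,2]$ and count components. What you leave implicit, and what the paper makes explicit, is that the count goes through Lemma~\ref{Components and critical points} rather than a direct topological argument. The critical points of $\mathcal T_n\circ P_m$ that matter are the $(n-1)m$ preimages under $P_m$ of the critical points of $\mathcal T_n$, and at each of these the critical value has modulus exactly $2$; the lemma then gives at least $(n-1)m+1$ components, hence $M(K)=1$. Your worry about ``component mergers through the complex plane'' dissolves once you use the critical-value count.

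For the closed lemniscate case, however, there is a genuine gap. Your route through the enlarged set $K_t=Q_m^{-1}(\overline{B(0,t)})$ and Theorem~\ref{capbigg1}, followed by ``pushing zeros back into $K$'', is not only left unfinished (you yourself flag the transfer step as the main obstacle) but is also unnecessary. The paper's construction is direct and you nearly stumble onto it in your last line: take
\[
P_{nm}(z)\;=\;\bigl(Q_m(z)\bigr)^n+1\;=\;\prod_{j=1}^{n}\bigl(Q_m(z)-\eta_j\bigr),\qquad \eta_j\ \text{the $n$th roots of $-1$}.
\]
The zeros satisfy $|Q_m(z)|=1$, so they lie in $K$, and $P_{nm}\in\mathscr P_{nm}(K)$. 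The critical points are of two types: the $n$ preimages under $Q_m$ of $0$ (critical value $1$), and the critical points $\gamma_1,\dots,\gamma_{m-1}$ of $Q_m$ itself, at which the critical value is $Q_m(\gamma_\ell)^n+1$. If $K$ has $m$ components then every $|Q_m(\gamma_\ell)|>1$ and all critical values exceed $1$ for large $n$, so Lemma~\ref{Components and critical points} gives exactly $nm$ components. If $K$ has fewer than $m$ components, some $|Q_m(\gamma_\ell)|\le 1$ and the argument requires an additional idea you do not supply: writing $Q_m(\gamma_\ell)=r_\ell e^{i\theta_\ell}$, one uses pigeonhole on the sequence of tuples $(n\theta_1,\dots,n\theta_{m-1})\bmod 2\pi$ to extract a subsequence $\mathcal N$ along which each $n\theta_\ell$ is near $0$, forcing $\Re\bigl(r_\ell^{\,n}e^{in\theta_\ell}\bigr)\ge 0$ and hence $|Q_m(\gamma_\ell)^n+1|\ge 1$. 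This is precisely what yields $\mathscr C_n(K)=n$ along $\mathcal N$, and it is absent from your proposal.
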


\begin{thm}\label{capeq1M}
Let $c(K) = 1.$ Suppose in addition $K$ is the closure of a bounded Jordan domain with $C^2$ boundary. Then, 
\begin{align}
    \frac{1}{2} \leq m(K) \leq M(K) =1.
\end{align}
\end{thm}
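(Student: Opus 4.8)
The bound $M(K)\le 1$ is immediate from $\mathcal{C}(\Lambda_p)\le\deg p$, and $m(K)\le M(K)$ is trivial; the content is the two lower bounds, which I would obtain by exhibiting, for large $n$, an explicit $p_n\in\mathscr{P}_n(K)$ with many components. Let $\Omega=\widehat{\C}\setminus K$ and let $\phi\colon\{|w|>1\}\to\Omega$ be the exterior conformal map with $\phi(\infty)=\infty$ and $\lim_{w\to\infty}\phi(w)/w>0$; since $c(K)=1$ this limit equals $c(K)=1$, so $\phi(w)=w+O(1)$. Because $\partial K$ is a $C^2$ Jordan curve, Kellogg's theorem extends $\phi$ to a homeomorphism of $\{|w|\ge1\}$ onto $\overline\Omega$, of class $C^1$ up to the boundary with $\phi'$ bounded and non-vanishing there; the equilibrium measure is $\nu=\phi_*(\tfrac{d\theta}{2\pi})$ and $U_\nu=g_\Omega$, so $U_\nu\equiv 0$ on $K$. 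Finally, $\log\phi'$ is analytic on $\{|w|>1\}$ with $\log\phi'(\infty)=0$, whence $\int_0^{2\pi}\log|\phi'(e^{i\theta})|\,\tfrac{d\theta}{2\pi}=0$; this identity, which is exactly the statement $c(K)=1$, underlies everything below.

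I would take the nodal polynomials $p_n(z)=\prod_{j=1}^n(z-\zeta_j)$ with $\zeta_j=\phi(e^{2\pi ij/n})\in\partial K$, so $p_n\in\mathscr{P}_n(K)$ (for $K=\overline{B(0,1)}$ this is $z^n-1$). The argument rests on two estimates for $|p_n|$. \emph{(i) On $\partial K$:} writing $\phi(\eta)-\phi(\eta')=(\eta-\eta')h(\eta,\eta')$ with $h$ continuous and non-vanishing, the logarithmic singularities cancel and the Euler--Maclaurin (``trapezoidal'') estimate for the equally spaced nodes gives, uniformly on $\partial K$,
\[
\log\bigl|p_n(\phi(e^{i\theta}))\bigr|=\log\bigl|e^{in\theta}-1\bigr|+o(1),
\]
with a rate governed by the $C^2$ regularity. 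Thus $|p_n|$ oscillates on $\partial K$ between $\approx 0$ (at the $n$ zeros $\zeta_j$) and $\approx 2$, so $\partial K\cap\Lambda_{p_n}$ is a union of $\sim n$ arcs, one around each $\zeta_j$, separated by $\sim n$ ``walls'' on which $|p_n|\ge1+c$ for a fixed $c>0$. \emph{(ii) On $\interior K$:} since $U_\nu\equiv0$ on $K$, for $\zeta\in\interior K$ one has $\log|p_n(\zeta)|=n\bigl(U_{\mu_n}(\zeta)-U_\nu(\zeta)\bigr)$ with $\mu_n=\tfrac1n\sum_j\delta_{\zeta_j}$, and the same trapezoidal estimate applied to the real-analytic (on $\{|w|=1\}$) function $s\mapsto\log|\zeta-\phi(e^{is})|$ gives $U_{\mu_n}(\zeta)-U_\nu(\zeta)=o(1/n)$, hence $|p_n(\zeta)|=1+o(1)$ locally uniformly on $\interior K$. (It is essential here that $c(K)=1$; for $c(K)\ne1$ one would get $|p_n|\approx c(K)^n$ on $\interior K$ instead.)

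To convert this into a count I would use that for monic $p$ of degree $n$ with distinct zeros, $\{|p|<t\}$ has $n$ components for small $t>0$, the number is non-increasing in $t$, and it drops only at critical values, by at most the multiplicity there; hence
\[
\mathcal{C}(\Lambda_{p_n})\ \ge\ n-\#\{\,z_c:\ |p_n(z_c)|<1\,\}\qquad(\text{critical points with multiplicity}).
\]
By Gauss--Lucas the critical points lie in $\operatorname{conv}(K)$, and from (i)--(ii) one checks that for large $n$ they lie in a fixed compact subset of $\interior K$; then (ii) shows that \emph{all} $n-1$ critical values of $p_n$ have modulus $1+o(1)$, i.e.\ they are pinned to the unit circle. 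The task is to show that at least $n/2-o(n)$ of them (with multiplicity) have modulus $\ge 1$ — this uses the rigidity forced by $c(K)=1$ and the mean-zero identity $\int\log|\phi'|=0$ — which yields $\mathcal{C}(\Lambda_{p_n})\ge n/2-o(n)$ and hence $m(K)\ge\tfrac12$. For $M(K)=1$ I would pass to a subsequence of $n$: the fluctuation of $|p_n(z_c)|-1$ across the critical points is governed by the $n$-th Fourier coefficient of $\phi$ near $\{|w|=1\}$, and after rotating the interpolation nodes by a suitable phase this fluctuation can, for suitable $n$, be made $\ge0$, forcing every critical value to have modulus $\ge1$ and hence $\mathcal{C}(\Lambda_{p_n})=n$ along that subsequence.

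\emph{Main obstacle.} The crux is the last step. Since all $n-1$ critical values of $p_n$ have modulus $1+o(1)$, the component count is governed entirely by the \emph{signs} of the tiny quantities $|p_n(z_c)|-1$; neither the Euler--Maclaurin estimates nor the discriminant identity $\prod_{z_c}p_n(z_c)=\pm\operatorname{disc}(p_n)/n^n=\pm\prod_{i<j}(\zeta_i-\zeta_j)^2/n^n$ pin down these signs — they only control sums. Extracting from the $C^2$ geometry and the normalization $c(K)=1$ that a definite fraction (at least one half, and along a subsequence asymptotically all) of the critical values lie on or outside the unit circle is the delicate heart of the matter; a secondary technical point, relevant when $K$ is not convex, is to verify that the critical points of $p_n$ remain in $\interior K$.
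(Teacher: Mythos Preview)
Your nodal-polynomial construction and the asymptotics (i)--(ii) are reasonable, but the obstacle you yourself flag is a genuine gap that leaves both lower bounds unproved. You have no mechanism to determine the \emph{signs} of the tiny quantities $|p_n(z_c)|-1$: as you note, the discriminant and the mean-zero identity $\int\log|\phi'|=0$ constrain only products and sums, and the vague appeal to ``rigidity'' or to ``rotating the nodes by a suitable phase'' is not an argument. Nothing you have written excludes the possibility that all $n-1$ critical values sit at $1-o(1)$, in which case $\mathcal{C}(\Lambda_{p_n})=1$. Your secondary worry is also real: with only $C^2$ boundary, $\phi$ is merely $C^{1,\alpha}$ up to the circle, so the rate in (ii) is $O(n^{-\alpha})$ and degrades as $\zeta\to\partial K$, while you have not shown that the critical points stay uniformly away from $\partial K$ (and for non-convex $K$, that they lie in $K$ at all).

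The paper bypasses the sign problem completely, by two different devices. For $m(K)\ge\tfrac12$ it \emph{randomizes}: with $P_n(z)=\prod_j(z-X_j)$, $X_j$ i.i.d.\ $\sim\nu$, the relation $U_\nu\equiv 0$ on $K$ (this is where $c(K)=1$ enters) makes $\log|P_n'(X_1)|=\sum_{j\ge2}\log|X_1-X_j|$ a sum of i.i.d.\ mean-zero variables, so Berry--Esseen gives $\mathbb{P}\bigl(|P_n'(X_1)|\ge e^{n^{1/2-\epsilon}}\bigr)\ge\tfrac12-o(1)$; together with a spacing bound and Lemma~\ref{suffcondn} (large derivative at a zero plus spacing $\Rightarrow$ isolated component, via Bernstein) this shows each $X_j$ is isolated with probability $\approx\tfrac12$, whence $\mathbb{E}[\mathcal{C}(\Lambda_{P_n})]\ge n/2-o(n)$ and the probabilistic method finishes. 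The $\tfrac12$ thus comes from the symmetry of the CLT, not from counting signs of critical values. For $M(K)=1$ the paper \emph{composes}: Faber polynomials furnish, for $\delta<1$ close to $1$, a monic $F_n$ with $F_n^{-1}(\overline{B(0,\delta)})\subset K^\circ$ (Lemma~\ref{approximation of lemniscate lemma}); setting $Q=E_{\delta,n_0}\circ F_n$, where $E_{\delta,n_0}$ is a $\delta$-scaled Erd\H{o}s--Herzog--Piranian polynomial whose $n_0-2$ nontrivial critical values are $\ge1$ \emph{by explicit construction}, the chain rule transfers those critical values to $Q$, giving at least $n(n_0-2)$ components out of degree $nn_0$, and $n_0\to\infty$ yields $M(K)=1$. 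In neither part does the paper ever need to resolve signs of near-unit critical values.
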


\vspace{0.1in}

The following lemma which relates the number of connected components of a lemniscate to the critical values of the polynomial will be repeatedly used in the proofs. We collect it here.

\begin{Lemma}\label{Components and critical points}
        Let $p$, $\Lambda_p$, and $\mathcal{C}(\Lambda_p)$ have the same meaning as in Section \ref{sec1}. Let $\{\beta_j\}_{j=1}^{n-1}$ be the set of critical points of $p$. Then,
        \begin{align*}
           \mathcal{C} \left(\Lambda_p\right)=1+ \left|\left\{\beta_j: |p(\beta_j)| \geq 1\right \}\right|.
        \end{align*}    
\end{Lemma}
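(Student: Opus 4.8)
The plan is to analyze the level sets $\{z : |p(z)| < t\}$ as $t$ increases from $0$ to $1$ and count how components merge, using the fact that topological changes of sublevel sets of $|p|$ occur precisely at critical values of $p$. Concretely, set $u(z) = |p(z)|$ and consider the family $\Lambda_p^{(t)} = \{z : u(z) < t\}$ for $t \in (0,1]$, so $\Lambda_p = \Lambda_p^{(1)}$. I would first observe that for $t$ small and positive, $\Lambda_p^{(t)}$ consists of exactly $n$ components counted with multiplicity collapsing at zeros — more precisely, near a zero $z_j$ of multiplicity $m_j$, the set $\{|p| < t\}$ is, for small $t$, a topological disk (a small perturbation of $\{|z - z_j|^{m_j} < t/|c|\}$, which is connected), so the number of components of $\Lambda_p^{(t)}$ for $t \downarrow 0$ equals the number of \emph{distinct} zeros of $p$. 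Hmm — but the Lemma as stated claims $\mathcal{C}(\Lambda_p) = 1 + |\{\beta_j : |p(\beta_j)| \ge 1\}|$ regardless of zero multiplicities, so I should instead run the argument on the whole range and not worry about the bottom: the key is that each component of $\Lambda_p$ (an open bounded set on which $|p|<1$ with $|p|=1$ on the boundary) contains critical points of $p$, and the count is governed by how the unique "large" component at the top ties things together.

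The cleaner route, which I would actually carry out, is a Morse-theoretic / Euler-characteristic bookkeeping on $\ol{\Lambda_p}$, or equivalently an argument via the Riemann–Hurwitz / argument principle for the proper map $p : \Lambda_p \to \mathbb{D}$. Since $|p| < 1$ on $\Lambda_p$ and $|p| = 1$ on $\partial\Lambda_p$, the restriction $p|_{\Lambda_p} : \Lambda_p \to \D$ is a proper holomorphic map of degree $n$; on each connected component $\Omega$ it is proper of some degree $d_\Omega \ge 1$ with $\sum_\Omega d_\Omega = n$. For a proper degree-$d$ holomorphic map $\Omega \to \D$ from a connected (necessarily finitely-connected) planar domain, Riemann–Hurwitz gives $\chi(\Omega) = d\cdot\chi(\D) - \#\{\text{critical points in }\Omega\text{, with multiplicity}\} = d - b_\Omega$, where $b_\Omega$ is the number of critical points (with multiplicity) of $p$ lying in $\Omega$. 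Summing over all components: $\sum_\Omega \chi(\Omega) = n - \#\{\text{critical points of }p\text{ in }\Lambda_p\}$. Now $\#\{\text{critical points in }\Lambda_p\} = (n-1) - \#\{j : |p(\beta_j)| \ge 1\}$ counted with multiplicity (a critical point $\beta_j$ lies in $\Lambda_p$ iff $|p(\beta_j)| < 1$; note the problem statement's set $\{\beta_j\}_{j=1}^{n-1}$ lists the $n-1$ critical points with multiplicity). The remaining input I need is that each component $\Omega$ of $\Lambda_p$ is \emph{simply connected}, so $\chi(\Omega) = 1$ and $\sum_\Omega \chi(\Omega) = \mathcal{C}(\Lambda_p)$; this gives $\mathcal{C}(\Lambda_p) = n - \big((n-1) - |\{j : |p(\beta_j)| \ge 1\}|\big) = 1 + |\{j : |p(\beta_j)| \ge 1\}|$, as claimed.

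So the argument reduces to two lemmas: (i) every component $\Omega$ of $\Lambda_p$ is simply connected, and (ii) the Riemann–Hurwitz count for proper holomorphic maps $\Omega \to \D$. For (i): if $\Omega$ were not simply connected, its complement in $\widehat{\C}$ would have a bounded component $E$; by the maximum modulus principle applied on $\Omega \cup E$ (using that $|p| \le 1$ on $\partial E \subset \partial\Omega$ — wait, $|p|=1$ there), one gets $|p| \le 1$ on $E$, hence $E \subset \ol{\Lambda_p}$, and in fact a boundary-maximum-principle / open-mapping argument forces $|p| < 1$ on $E$, so $E$ would be part of a component of $\Lambda_p$ — but $E$ is enclosed by $\Omega$, contradicting that $\Omega$ is a single component with $\partial E$ in its boundary. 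This is the standard fact that filled lemniscates have simply connected components; I'd state it carefully. For (ii): $p|_\Omega$ is proper (preimages of compacts are compact, since $\partial\Omega$ maps to $\partial\D$) and holomorphic, hence a branched covering of $\D$ of some finite degree $d_\Omega$; Riemann–Hurwitz for branched covers of the disk gives $\chi(\Omega) = d_\Omega - \sum_{w\in\Omega}(\mathrm{ord}_w(p') )$, i.e. $1 - d_\Omega + b_\Omega = 0$ with $b_\Omega$ the branching number. The main obstacle I anticipate is handling multiplicities correctly and being careful that "critical point" is counted consistently (a critical point of multiplicity $k$ as a zero of $p'$ contributes $k$ to both the global count $n-1$ and to the local Riemann–Hurwitz branching number), together with the edge cases where a critical value has modulus exactly $1$ — such $\beta_j$ lie on $\partial\Lambda_p$, not in $\Lambda_p$, and contribute to the count on the right-hand side but not to the interior branching, which is exactly what makes the formula come out with "$\ge 1$". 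I would double-check this alignment by testing on $p(z) = z^n - 1$ and on $p(z) = z^n$.
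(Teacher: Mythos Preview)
The paper does not supply its own proof of this lemma; it simply cites \cite{ghosh2023number} and \cite[Proposition~2.1]{Twodimshapeskhavinson}. Your Riemann--Hurwitz argument is correct and is in fact the standard route taken in those references: each component $\Omega$ of $\Lambda_p$ is simply connected by the maximum principle, the restriction $p|_\Omega:\Omega\to\D$ is proper holomorphic of some degree $d_\Omega$ with $\sum_\Omega d_\Omega=n$, and (via Riemann--Hurwitz, or equivalently the fact that a proper holomorphic self-map of $\D$ is a Blaschke product with exactly $d-1$ critical points in $\D$) there are precisely $d_\Omega-1$ critical points of $p$ inside $\Omega$. Summing over components gives $n-\mathcal{C}(\Lambda_p)=(n-1)-|\{j:|p(\beta_j)|\ge 1\}|$, which is the claim.

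Two minor clean-ups. First, your simple-connectivity sketch is right in spirit but tangled; the one-line version is: any Jordan curve $\gamma\subset\Omega$ has $|p|<1$ on $\gamma$, hence $|p|<1$ on the bounded region it encloses by the maximum principle, so that region lies in $\Lambda_p$ and (being connected and meeting $\Omega$) in $\Omega$. Second, your treatment of the edge case $|p(\beta_j)|=1$ is exactly right and worth keeping explicit: such $\beta_j$ sit on $\partial\Lambda_p$, are not counted as interior branch points, and therefore land on the ``$\ge 1$'' side of the formula --- this is consistent with the paper's later use of the lemma (e.g.\ Example~\ref{Erdosp}, where all critical values have modulus exactly $1$).
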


A proof of this lemma can be found in \cite{ghosh2023number}, or in \cite[Proposition $2.1$]{Twodimshapeskhavinson}. Next, we give two classical examples where the exact behaviour of $\mathscr{C}_n(K)$ can be determined.

\begin{example}\label{Erdosp}{\textbf{(Closed unit disk $\overline{\mb{D}}$)}}\\
     Consider the sequence of polynomials $P_n(z)=z^n-1$, whose zeros are the $n^{\text{th}}$ roots of unity lying on the unit circle. The critical points of $P_n$ are at zero, with multiplicity $(n-1)$, and the critical values are of absolute value $|P_n(0)| = 1$. Therefore, by Lemma \ref{Components and critical points}, we see that $\Lambda_{P_n}$ has exactly $n$ components. 
\end{example}

\begin{example}{\textbf{(Chebyshev polynomial in $[-2,2]$})}\\
    Let $T_n(z)$ be the Chebyshev polynomials of degree $n$ for the compact set $[-1,1]$, then,
    \begin{align}\label{a1}
        T_n(x) = 2^{n-1}x^n+.... = \cos(n\theta), \quad \textit{ where } x = \cos(\theta). 
    \end{align}
    We know that the zeros and critical points of $T_n(z)$ are respectively, \big(c.f. \cite{chebyshevpolynomials}\big)
    \begin{align}\label{a2}
       x_k = \frac{\cos(k - \frac{1}{2}) \pi }{n},\quad  k = 1,2,\cdots,n. \quad \quad  \beta_k = \cos \left(\frac{k\pi}{n}\right), \quad k = 1,2,\cdots,n-1.
    \end{align}
    Since we are interested in monic polynomials, let us define the monic Chebyshev polynomials $\mathcal{T}_n(x):= \frac{1}{2^{n-1}} T_n(x)$. From \eqref{a1} and \eqref{a2} we can  compute the critical values  
    \begin{align}\label{a3}
       |\mathcal{T}_n(\beta_k)|=\frac{1}{2^{n-1}}|T_n(\beta_k)|= \frac{1}{2^{n-1}}|\cos(k\pi)|=\frac{1}{2^{n-1}}, \quad  k = 1,\cdots,n-1.
      \end{align}
      Now, one can define the monic Chebyshev polynomials for $[-2,2]$ just by scaling the zeros by a factor of $2$. That is, $\mathcal{T}_n^{[-2,2]}(x):= 2^nT_n({x}/{2}),$ where the factor of $2$ is multiplied to make $\mathcal{T}_n^{[-2,2]}$ monic. The zeroes and the critical points of $\mathcal{T}_n^{[-2,2]}$ are $2x_k$ and $2\beta_k$ respectively. Therefore, the critical values are 
      \begin{align}\label{a4}
          \left|\mathcal{T}_n^{[-2,2]}(2\beta_k)\right|=2^n\left|T_n\left(\frac{2\beta_k}{2}\right)\right|=2.
      \end{align}
     Using lemma \ref{Components and critical points} with \eqref{a4}, we get that the lemniscate for Chebyshev polynomials in $[-2,2]$ has $n$ components (see Figure-\ref{Chebyshev}).
\end{example}



 \begin{figure}
\centering
\includegraphics[scale=.775]{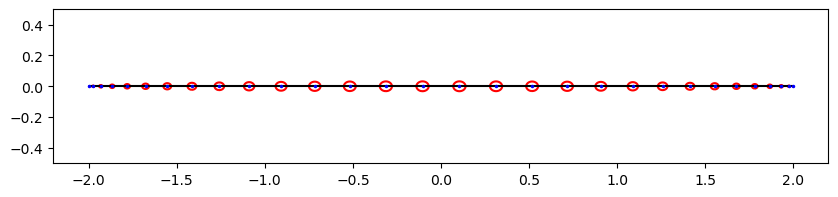}
\caption{Unit lemniscates of the Chebyshev polynomial of degree $30$ in $[-2,2]$ is plotted in red, along with the zeros shown in black.}\label{Chebyshev}
\end{figure}

 \begin{figure}
\centering
\includegraphics[scale=.325]{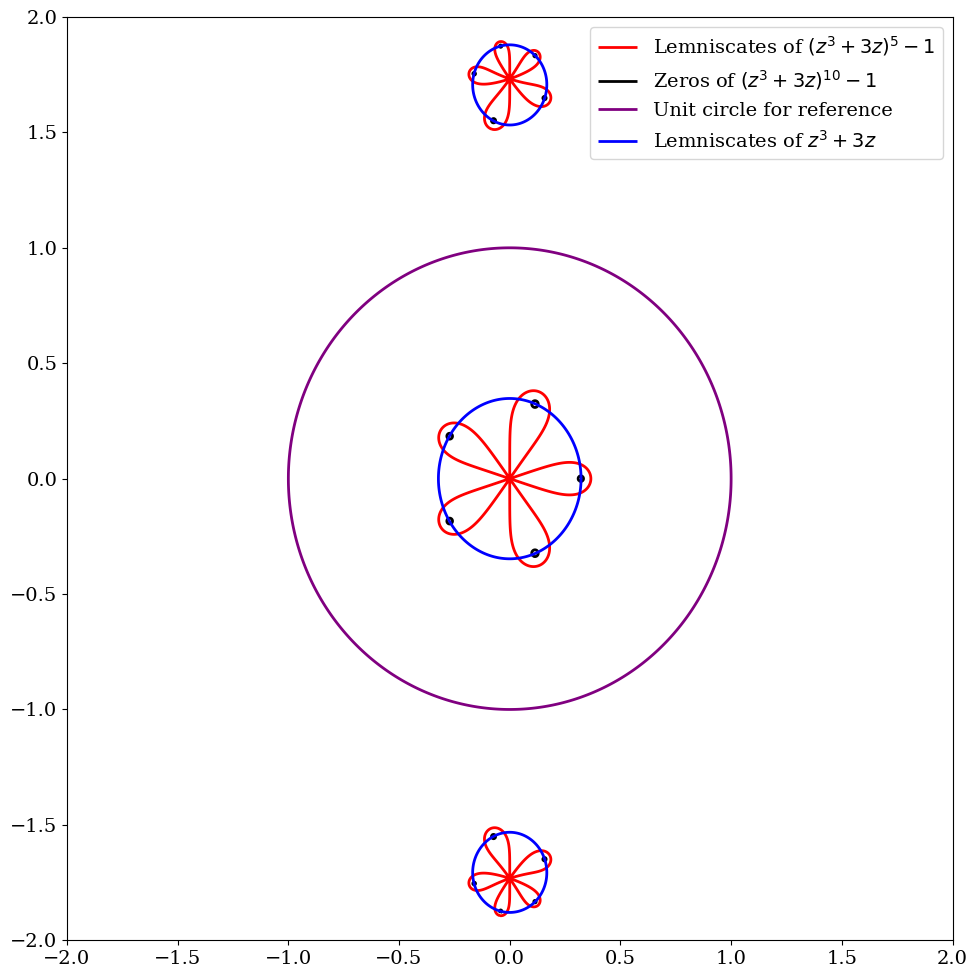}
\includegraphics[scale=.325]{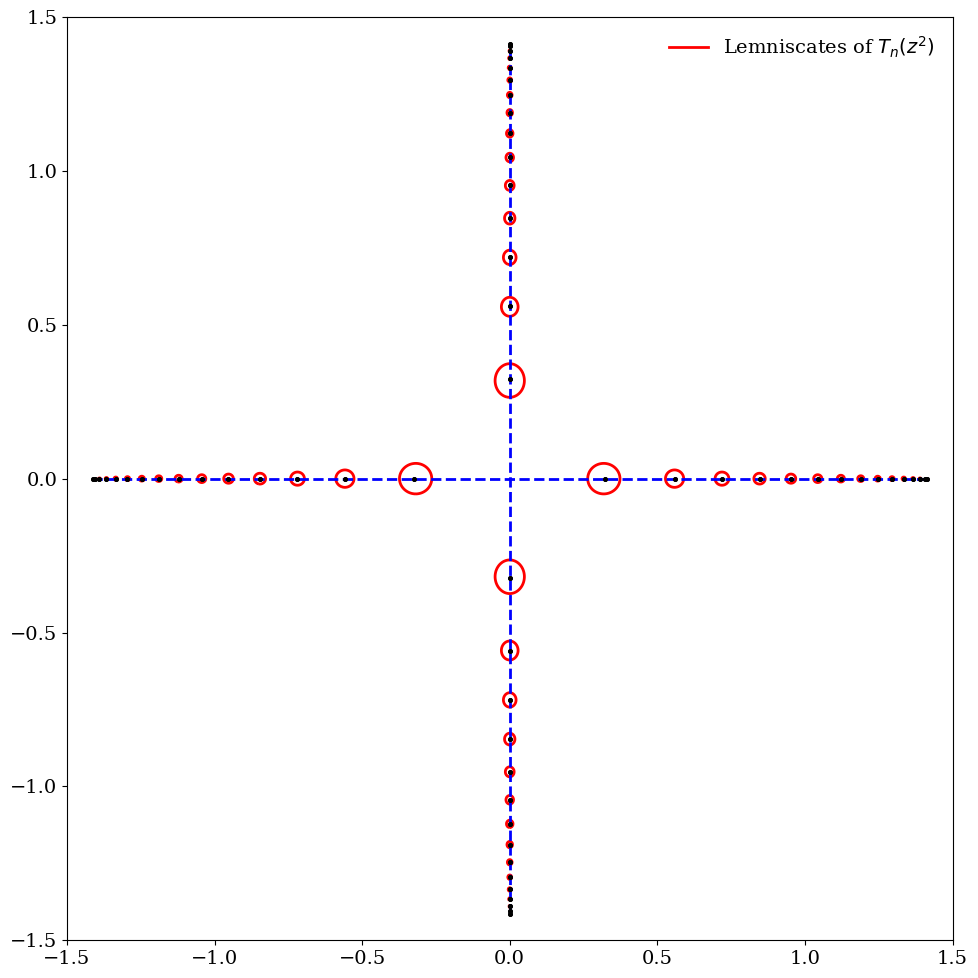}
\caption{In the left, the unit lemniscates of $(z^3+3z)^{5}-1$ are in red along with its zeros in black lying inside the compact set $(z^3+3z)^{-1} (B(0,1))$. On the right, the period-2 set $(z^2)^{-1}([-2,2])$ is plotted in blue and unit lemniscate of $\mathcal{T}_{30}(z^2)$ is plotted in red along with its zeros in black.}\label{Second example}

\end{figure}

 \subsection{Overview of the proofs}
In this section, we loosely sketch the ideas of the main theorems. Our proofs are based on tools from potential theory and probability. The connection between polynomials and potentials stems from the following well known fact: if $p$ is a polynomial with zero set $Z,$ and $\mu_p$ is the empirical measure of its zeros defined by
\[\mu_p:= \frac{1}{\deg(p)}\sum_{z\in Z}\delta_{z}\]

then,
\begin{equation}\label{polypotential}
\frac{1}{\deg(p)}\log|p(z)| = U_{\mu_p}(z).  
\end{equation}

 In the context of studying lemniscates, this relation implies that $\Lambda_p = \{U_{\mu_p}(z) < 0\}.$ We now proceed to explain the main ideas of how $c(K)$ determines $M(K)$ and $m(K).$

\subsubsection{\bf{Bounds on $M(K)$ and $m(K)$:}} 

Let $K$ be the compact in question. For each $n,$ pick a  $p_n\in\mathscr{P}_n(K)$ having $\mathscr{C}_n(K)$ connected components for its lemniscate. Let $\mu_n$ denote the empirical measure of the zeros of $p_n.$ Since all the measures $\mu_n$ are supported in $K$, some subsequence, which we continue to call $\mu_n$, will converge weakly to a probability measure $\mu$ with $\supp(\mu)\subset K.$ This gives (in an appropriate sense) that $U_{\mu_n}(z)\approx U_{\mu}(z).$ Now the behavior of $U_{\mu}$ depends on the capacity of $K.$ 

\vspace{0.1in}

\noindent If $c(K) < 1,$ then \emph{every} probability measure $\sigma$ supported in $K$ satisfies 
    
    \begin{equation}\label{negative}
     \{U_{\sigma} < 0\} \hspace{0.05in}\mbox{ in some open ball B with} \hspace{0.05in} \sigma(B) > 0.   
    \end{equation}
    
    Applying \eqref{negative} to our weak limit $\mu$ from the previous paragraph, and remembering that the measures and their potentials are close, we get that for large $n,$ $U_{\mu_n} < 0$ on $B,$ and $\mu_n(B) > c_1/2.$ This fact combined with \eqref{polypotential} shows that for all large $n$, there is one component of $\Lambda_{p_n}$ which contains at least a positive proportion of the zeros. In other words, $M(K) < 1$.

\vspace{0.1in}

\noindent If $c(K) > 1,$ then \eqref{negative} no longer holds for every measure. In fact, the equilibrium measure $\nu$ of $K$ satisfies  $U_{\nu}(z) > \log c(K) > 0$ for all $z\in\C.$ This works in our favor. For if we then take suitable point masses $\mu_n$ supported on $K$ so that $\mu_n\to\nu$, then $U_{\mu_n}(z)\approx U_{\nu}(z)$ forces $\{U_{\mu_n} < 0\}$ to be rather small (it cannot be empty since at the point mass it is $-\infty$). A careful choice of the spacing between the point masses ensures $\{U_{\mu_n}< 0\} = \{\frac{1}{n}\log|p_n|< 0\}$ has $n$ components, much stronger than $M(K) = 1$.

\vspace{0.in}

\noindent If $c(K) = 1$, then the equilibirum measure $\nu$  satisfies 

    \begin{equation}
      U_{\nu}(z) = 0, z\in K, \hspace{0.05in}\mbox{and} \hspace{0.05in} U_{\nu}(z) > 0, z\in\C\setminus K. 
    \end{equation}
    
    This presents the most interesting and challenging case for it is right at the ``edge'' of the previous two scenarios. The reasoning given in the capacity less than one case tells us that if we want to look for close to $n$ lemniscate components for some sequence $p_n\in\mathscr{P}_n(K),$ then we better choose $p_n$ so that the corresponding empirical measures $\mu_n$ converge weakly  to $\nu$ (any measure other than $\nu$ has non-empty negative region for its potential). But any choice will not do because the lemniscates are exceedingly sensitive to small perturbations of the measures. For instance consider $\mu_n$ and $\sigma_n$, the empirical measures of the zeros of $p_n(z) = z^n - 1$ and $q_n(z) = z^n - \frac{1}{n}$ respectively. Then both $\mu_n$ and $\sigma_n$ are supported in $\overline{\D}$, and both converge weakly to $\frac{d\theta}{2\pi}=\nu_{\overline{\D}}$. However $\Lambda_{p_n}$ has $n$ connected components (see example \ref{Erdosp} ), whereas $\Lambda_{q_n}$ has only one component, as the reader can check by applying Lemma \ref{Components and critical points}. We get around this issue by first proving $M(K) = 1$ when $K$ itself is a lemniscate (along with its boundary). We then approximate a Jordan domain with analytic boundary by a lemniscate, and show that this approximation is good enough to preserve $M(K)=1.$ In general we conjecture that for arbitrary compacts $K$ with $c(K) = 1,$ we have $M(K)=1$ but the proof has been elusive to us. 

\vspace{0.1in}

 We next explain the idea behind obtaining non-trivial lower bounds for $m(K).$ Our approach is to consider \emph{random} polynomials $P_n(z) = \prod_{j=1}^{n}(z- X_j)$, where the $X_j$ are i.i.d. random variables whose law $\mu$ is supported in $K.$ Our first observation is that if $c(K) > \frac{1}{2}$, the diameter of $K$ is strictly bigger than $1$. Using this, we prove that there is a choice of a measure $\mu$ which is supported at opposite ends of a diameter, for which the expected number of components $\mathbb{E}(\mathcal{C}_n)$ is larger than $cn$ for some $c>0.$ From here, an application of the probabilistic method shows that for some realization of the $X_j$, polynomials $p_n$ exist for each $n,$ which satisfy $\Lambda_{p_n}$ has at least $cn$ many components.

\vspace{0.1in}

\section{Preliminary Theorems and Lemmas}
In this section, we collect some preliminary analytic and probabilistic results which will be used in the proofs of the main theorems. The first one is the following construction due to Erd\"os, Herzog and Piranian, of polynomials whose lemniscate closures have nearly maximal component count. 


Our next lemma shows that for smooth enough compacts, the equilibrium measure $\nu$ is very well behaved on small balls.





\begin{Lemma}\label{Bound on probability for equilibrium measures}
    Let $L$ be a $C^{2}$ Jordan arc and $\nu_L$ be the equilibrium measure on $L$. Then for $z \in L$, and small $r>0$,  there exist constants $C_1,C_2 > 0 $ such that
    \begin{align*}
        C_1r \leq \nu_L\big(B(z,r)\big) \leq  C_2\sqrt{r}.
    \end{align*}
   On the other hand, if $L$ is a $C^{2}$ Jordan curve, then $\nu_L$ is of the order of arc length measure on small balls. That is, for some $C_3, C_4 >0 $
   \begin{align*}
        C_3r \leq \nu_L\big(B(z,r)\big) \leq  C_4 r
    \end{align*}
\end{Lemma}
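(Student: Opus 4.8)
The plan is to exploit the conformal-mapping description of equilibrium measures on Jordan arcs and curves, together with boundary regularity of the Riemann map supplied by the $C^2$ hypothesis. First I would treat the Jordan curve case, since it is cleaner. Let $\Omega$ be the unbounded component of $\widehat{\C}\setminus L$ and let $\Phi:\Omega\to\widehat{\C}\setminus\overline{\D}$ be the Riemann map with $\Phi(\infty)=\infty$, $\Phi'(\infty)>0$. It is classical that the equilibrium measure $\nu_L$ is the pullback under $\Phi$ of normalized arc length $\frac{|d\zeta|}{2\pi}$ on the unit circle; equivalently $\nu_L$ is harmonic measure of $\Omega$ at $\infty$. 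Because $L$ is a $C^2$ Jordan curve, a theorem of Kellogg–Warschawski guarantees that $\Phi$ extends to a $C^1$ diffeomorphism of $\overline{\Omega}$ onto $\widehat{\C}\setminus\D$ with $\Phi'$ continuous and nonvanishing up to the boundary; hence both $|\Phi'|$ and $|(\Phi^{-1})'|$ are bounded above and below by positive constants on $L$ and on $\partial\D$ respectively. Consequently, for $z\in L$ and small $r$, the preimage-side estimate $\nu_L(B(z,r)) = \frac{1}{2\pi}\,|\Phi(L\cap B(z,r))|$ is comparable to the arc-length of $L\cap B(z,r)$, which for a $C^2$ curve is comparable to $r$. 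This gives $C_3 r\leq \nu_L(B(z,r))\leq C_4 r$.

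For the Jordan arc $L$, the key structural difference is that the complement $\widehat{\C}\setminus L$ is a single domain $\Omega$ and the conformal map $\psi:\widehat{\C}\setminus\overline{\D}\to\Omega$ (normalized at $\infty$) has a square-root type singularity at the two preimages of the endpoints of the arc, while being a smooth diffeomorphism away from them; the equilibrium measure is again the image of $\frac{|d\zeta|}{2\pi}$ under $\psi$, counted with multiplicity two away from the endpoints since the arc is approached from both sides. Away from the endpoints, the same Kellogg–Warschawski regularity applies and $\nu_L(B(z,r))\asymp r$, which supplies the lower bound $C_1 r\leq \nu_L(B(z,r))$ uniformly (the endpoints only help make the measure locally smaller, never larger, near themselves). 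For the upper bound I would argue that the worst case is a ball centered at an endpoint $z_0$ of the arc: there $\psi^{-1}$ behaves like $w\mapsto (w-w_0)^{1/2}$, so a ball $B(z_0,r)$ pulls back to a circular-arc neighborhood of $w_0$ of diameter $\asymp \sqrt r$, whence $\nu_L(B(z_0,r))\asymp \sqrt r$. For a general $z\in L$ one interpolates: if $z$ is at distance $d$ from the nearer endpoint, then for $r\lesssim d$ one gets the $\asymp r$ bound and for $r\gtrsim d$ one gets $\lesssim \sqrt r$, and in all cases $\nu_L(B(z,r))\leq C_2\sqrt r$.

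The main obstacle, and the step I would spend the most care on, is making the endpoint analysis of the arc rigorous and uniform: one must produce the local expansion $\psi(w) = z_0 + a(w-w_0)^2 + O((w-w_0)^3)$ with $a\neq 0$ from the $C^2$ smoothness of the arc (not merely analyticity), and then convert this into a two-sided estimate on the arc-length of $\psi$ applied to a disc, uniformly as the center $z$ slides from the endpoint into the interior of the arc. A convenient way to package this is to uniformize via a global square-root change of variables that straightens the arc near each endpoint, reducing to the Jordan-curve case already handled; one should check the constants depend only on the $C^2$ norm of a parametrization of $L$ and a lower bound on its "thickness." Everything else — the identification of $\nu_L$ with harmonic measure and with the image of arc length, and the Kellogg–Warschawski boundary regularity in the interior of the arc — is standard and can be cited from \cite{Ransfordpotentialthryincomplexplane, SaffTotikLogpotential}.
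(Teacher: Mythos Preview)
Your outline is sound and essentially what one does to prove such a statement. The paper, however, does not prove the lemma at all: it simply cites \cite[Lemma~2.1]{toticeqilibriummeasure} and moves on. So your approach is not so much ``different'' as ``actually present''; the conformal-mapping/Kellogg--Warschawski argument you sketch is precisely the mechanism behind the cited result of Totik, and would constitute a self-contained proof where the paper chose to defer.

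One genuine slip to flag: your parenthetical ``the endpoints only help make the measure locally smaller, never larger, near themselves'' is the reverse of the truth. The equilibrium density on a smooth arc blows up like $1/\sqrt{d}$ near each endpoint (think of $\frac{dx}{\pi\sqrt{1-x^2}}$ on $[-1,1]$), so $\nu_L(B(z,r))$ is \emph{larger} there, of order $\sqrt r$ rather than $r$. This does not damage your lower bound $C_1 r\le\nu_L(B(z,r))$ --- since $\sqrt r\ge r$ for small $r$, the endpoint behavior only reinforces it --- but your stated justification is backwards. The delicate inequality is the upper bound $\nu_L(B(z,r))\le C_2\sqrt r$, and there your interpolation argument (comparing $r$ to the distance $d$ from the nearest endpoint, using $r/\sqrt d\le\sqrt r$ when $r\le d$) is the correct way to make it uniform. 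A second, more minor caution: extracting the expansion $\psi(w)=z_0+a(w-w_0)^2+O((w-w_0)^3)$ with $a\ne 0$ from mere $C^2$ smoothness of the arc requires some care; what you really need (and what suffices) is the two-sided estimate $|\psi(w)-z_0|\asymp|w-w_0|^2$, which follows from the Warschawski-type boundary behavior at a slit tip without demanding a full third-order remainder.
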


The proof of this lemma follows from \cite[lemma $2.1$]{toticeqilibriummeasure}.

\begin{prop}[\textbf{Scaling of zeros and corresponding critical points}]\label{Scaling of zeros and corresponding critical points}
    Let $p(z):= \prod_i^m(z-z_i)$. For $t>0,$ we define the $t$-scaling of $p$ by
    \begin{align}\label{scaled poly def}
        p_{t}(z):= t^m p\left(\frac{z}{t}\right) = \prod_i^m(z-tz_i).
    \end{align}
    Then $\beta_1,\cdots,\beta_{m-1}$ are critical points of $p$ iff $t\beta_1,\cdots,t\beta_{m-1}$ are critical points of $p_{t}$. Furthermore, the corresponding critical values are related by 
     \begin{align}\label{scaling of critical value}
       |p_{t}(t\beta_j)|=t^m|p(\beta_j)|.
    \end{align}
    
\end{prop}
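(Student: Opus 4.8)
The plan is to verify the two assertions by direct computation from the definition $p_t(z) = t^m p(z/t)$. First I would establish the factored form: since $p(z) = \prod_{i=1}^m (z - z_i)$, we have $p_t(z) = t^m \prod_{i=1}^m (z/t - z_i) = \prod_{i=1}^m t(z/t - z_i) = \prod_{i=1}^m (z - t z_i)$, which is the claimed expression and shows in particular that $p_t$ is again monic of degree $m$ with zeros $tz_i$.

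Next, I would differentiate. By the chain rule applied to $p_t(z) = t^m p(z/t)$, we get $p_t'(z) = t^m \cdot \frac{1}{t} p'(z/t) = t^{m-1} p'(z/t)$. Hence $p_t'(w) = 0$ if and only if $p'(w/t) = 0$ (as $t > 0$, the factor $t^{m-1}$ is nonzero), i.e. if and only if $w/t$ is a critical point of $p$. Writing $w = t\beta$, this says exactly that $t\beta_1, \dots, t\beta_{m-1}$ are the critical points of $p_t$ precisely when $\beta_1, \dots, \beta_{m-1}$ are the critical points of $p$. Since $p$ has degree $m$, it has $m-1$ critical points counted with multiplicity, and the bijection $\beta \mapsto t\beta$ preserves multiplicities, so the correspondence is exact.

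Finally, for the critical values, I would simply evaluate: $p_t(t\beta_j) = t^m p(t\beta_j / t) = t^m p(\beta_j)$, and taking absolute values gives $|p_t(t\beta_j)| = t^m |p(\beta_j)|$, which is \eqref{scaling of critical value}.

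There is no real obstacle here; this is a routine verification and the only thing to be careful about is bookkeeping the powers of $t$ correctly (the $t^m$ in the definition versus the $t^{m-1}$ after differentiation) and noting that $t > 0$ is used only to ensure $t^{m-1} \neq 0$ and that the map $\beta \mapsto t\beta$ is a bijection of $\mathbb{C}$. The statement is included here because it will be invoked repeatedly later to normalize polynomials (rescaling a compact set $K$ to adjust its capacity while tracking exactly how lemniscate components, which by Lemma \ref{Components and critical points} are governed by the critical values, transform under the scaling).
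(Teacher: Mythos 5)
Your computation is correct and is exactly the "routine computation from the definition" that the paper declares and omits: chain rule gives $p_t'(z) = t^{m-1}p'(z/t)$, so critical points scale by $t$, and direct substitution gives $p_t(t\beta_j) = t^m p(\beta_j)$. Nothing more is needed.
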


\begin{proof}
    The proof follows directly from the definition of $p_{t}(z)$ and routine computations, and is therefore omitted.
\end{proof}

        

\begin{Lemma}[\textbf{Components of lemniscates touch at critical points}]\label{touching of lemniscates}Let $p$ be a complex polynomial. Suppose that $U_1$ and $U_2$ are the boundaries of two components of the $r-$lemniscate $\Lambda_{p,r}:=\{z:|p(z)|<r\}$ which touch each other at $z_0$. Then $ p^{'}(z_0) = 0$.
\end{Lemma}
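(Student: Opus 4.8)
The plan is to show that if two components of the $r$-lemniscate touch at $z_0$, then near $z_0$ the function $|p|$ fails to be locally injective in the way a nonzero-derivative point would force, and hence $p'(z_0) = 0$. First I would note the basic setup: since $z_0$ lies on the boundary $\partial\Lambda_{p,r}$, we have $|p(z_0)| = r$ by continuity, and $z_0$ is a limit of points where $|p| < r$. Suppose toward a contradiction that $p'(z_0) \neq 0$. Then $p$ is a conformal (biholomorphic) map from a small disk $B(z_0,\delta)$ onto a neighborhood $V$ of $w_0 := p(z_0)$, with $|w_0| = r$. Under this local biholomorphism, the set $\{|p| < r\} \cap B(z_0,\delta)$ is carried to $\{|w| < r\} \cap V$, which is (an open arc's worth of) one side of the circle $\{|w| = r\}$ — in particular it is a \emph{connected} open set, being the conformal image of the connected set $\{|w|<r\}\cap V$ (a half-disk-like region, which is connected for $\delta$ small).

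The key step is then to derive the contradiction from this local connectedness. The hypothesis says $U_1$ and $U_2$ bound two \emph{distinct} components, say $\Omega_1$ and $\Omega_2$, of $\Lambda_{p,r}$, and both closures contain $z_0$. Pick points $z_1 \in \Omega_1$ and $z_2 \in \Omega_2$ arbitrarily close to $z_0$ — specifically inside $B(z_0,\delta)$. Then $z_1, z_2 \in \{|p| < r\} \cap B(z_0,\delta)$, which we just argued is connected and open, hence path-connected. So there is a path from $z_1$ to $z_2$ inside $\{|p| < r\}$, which means $z_1$ and $z_2$ lie in the same connected component of $\Lambda_{p,r}$. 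But $z_1 \in \Omega_1$ and $z_2 \in \Omega_2$ with $\Omega_1 \neq \Omega_2$ — contradiction. Therefore $p'(z_0) = 0$.

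One point that needs care: I should make sure that $\{|p|<r\} \cap B(z_0,\delta)$ is genuinely nonempty and that points of both $\Omega_1$ and $\Omega_2$ can be found arbitrarily near $z_0$ inside this set. The latter is exactly the meaning of "$U_1$ and $U_2$ touch at $z_0$": $z_0 \in \overline{\Omega_1} \cap \overline{\Omega_2}$, so every neighborhood of $z_0$ meets both $\Omega_1$ and $\Omega_2$. I would also want to record why the conformal image description gives a connected set: choosing $\delta$ small enough that $p$ is injective on $\overline{B(z_0,\delta)}$ and $p(B(z_0,\delta))$ is a Jordan domain containing $w_0$ on its boundary arc $\{|w|=r\}$, the preimage of the open half $\{|w|<r\}$ is connected because $\{|w|<r\}\cap p(B(z_0,\delta))$ is connected (it is the intersection of a disk with a Jordan domain through a boundary point of the disk — connected for small $\delta$ since locally the circle $\{|w| = r\}$ is a smooth arc separating $V$ into two connected pieces).

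The main obstacle is essentially bookkeeping rather than depth: one must phrase "touch at $z_0$" precisely and ensure the local conformal picture is valid (injectivity on a closed disk, the image meeting the circle $|w|=r$ in a smooth arc). Once that is set up, the contradiction via path-connectedness of the local sublevel set is immediate. No heavy machinery beyond the open mapping / inverse function theorem for holomorphic functions is required.
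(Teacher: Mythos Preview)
Your argument is correct and follows essentially the same route as the paper: assume $p'(z_0)\neq 0$, invoke the inverse function theorem to get a local biholomorphism near $z_0$, and derive a contradiction from the fact that the local sublevel set $\{|p|<r\}$ is then connected (the paper phrases this last step as a homeomorphism obstruction, but it is the same idea). Your write-up is in fact more carefully detailed than the paper's, which leaves the topological contradiction implicit.
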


\begin{proof}
     If possible, let $ p^{'}(z_0)\neq 0$, then by inverse function theorem, there is a neighbourhood $U_0, V_0$ such that $p: U_0 \longrightarrow V_0$ is a diffeomorphism, in particular homeomorphism. Clearly this cannot happen as $p\left(U_0\cap (U_1 \cup U_2)\right)$ is not homeomorphic to $\mb{D} \cap V_0.$
\end{proof}

\begin{Lemma}[\textbf{Sufficient condition for isolated components}]\label{suffcondn}
Let $p(z) = \prod_{j=1}^{n}(z- z_j)$ be a polynomial of degree $n.$ Suppose that there exist constants $\alpha, \beta > 0$  such that
\begin{enumerate}
    \item $|p'(z_j)|\geq\exp(n^{\alpha})$ for all $1\leq j\leq n,$ and
    \item $\min_{j\neq k}|z_j - z_k|\geq\dfrac{1}{n^{\beta}}.$
\end{enumerate}
Then if $n$ is large enough, $\Lambda_p$ has $n$ connected components each of which contains exactly one zero. 
\end{Lemma}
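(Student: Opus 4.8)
The plan is to show that the hypotheses force each zero $z_j$ to sit inside its own tiny component of $\Lambda_p$, and that these small components exhaust $\Lambda_p$. First I would localize around a fixed zero $z_j$. Write $p(z) = (z-z_j)\,q_j(z)$ where $q_j(z) = \prod_{k\neq j}(z-z_k)$, so that $|q_j(z_j)| = |p'(z_j)| \geq \exp(n^\alpha)$. For $z$ in a small disk $B(z_j, \rho)$ with $\rho$ to be chosen much smaller than $n^{-\beta}$, the factor $|z-z_k|$ changes by a bounded multiplicative amount (since $|z-z_k| \geq \frac{1}{2}n^{-\beta}$ and $|z-z_k| \leq |z_j - z_k| + \rho$), so $|q_j(z)|$ stays comparable to $|q_j(z_j)|$; quantitatively $\log|q_j(z)| \geq \log|p'(z_j)| - \sum_{k\neq j}\log\frac{|z_j-z_k|+\rho}{|z_j-z_k|} \geq n^\alpha - (n-1)\log(1 + \rho n^\beta)$, and choosing $\rho = n^{-\beta - 2}$ (say) makes the correction term $O(n^{1-1})=O(1)$, so $|q_j(z)| \geq \exp(n^\alpha/2)$ on $B(z_j,\rho)$ for large $n$. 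Hence on that disk, $|p(z)| < 1$ forces $|z - z_j| < \exp(-n^\alpha/2)$, i.e. the component of $\Lambda_p$ meeting $B(z_j,\rho)$ is actually contained in the far smaller disk $B(z_j, \exp(-n^\alpha/2))$.

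Next I would argue these components are genuinely distinct and that there are no others. Distinctness: the balls $B(z_j, \exp(-n^\alpha/2))$ are pairwise disjoint for large $n$ because the centers are $\geq n^{-\beta}$ apart while the radii are exponentially small; and each contains $z_j$, hence meets $\Lambda_p$, so $\Lambda_p$ has at least $n$ components. No others: by the maximum modulus principle (quoted in the introduction) every component of $\Lambda_p$ contains at least one zero of $p$; a component containing $z_j$ is, by the previous paragraph applied at $z_j$, trapped in $B(z_j,\exp(-n^\alpha/2))$, and since these trapping balls are disjoint, distinct zeros lie in distinct components and no component can contain two zeros. Therefore $\mathcal{C}(\Lambda_p)$ is exactly $n$, with one zero per component.

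An alternative, perhaps cleaner, route is via Lemma \ref{Components and critical points}: it suffices to show $|p(\beta)| \geq 1$ at every critical point $\beta$ of $p$. Each component of $\Lambda_p$ is a bounded open set whose boundary lies in $\{|p|=1\}$; if a critical point $\beta$ had $|p(\beta)| < 1$ it would lie in some component, and the component containing a zero $z_j$ is contained in $B(z_j, \exp(-n^\alpha/2))$ by the estimate above — but one still has to rule out critical points drifting there. Concretely, on the punctured disk $0 < |z - z_j| < \rho$ we have $|p(z)| = |z-z_j||q_j(z)|$ with $|q_j| \geq \exp(n^\alpha/2)$, and a short computation with $p'/p = \frac{1}{z-z_j} + \frac{q_j'}{q_j}$ together with a bound $|q_j'(z)/q_j(z)| \leq \sum_{k\neq j} \frac{1}{|z-z_k|} \leq 2n^{\beta+1}$ shows $p'$ cannot vanish in the annulus $\exp(-n^\alpha/2) \le |z-z_j| < \rho$; hence any critical point in $B(z_j,\rho)$ lies in $B(z_j, \exp(-n^\alpha/2))$, where $|p| < 1$ — so in fact there are no critical points at all in $\bigcup_j B(z_j,\rho)$ except possibly inside the exponentially small balls, and even there $|p| < 1$ only on a set not containing $\beta$ unless... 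I would avoid this subtlety and stick with the first (topological/maximum-modulus) argument, which is self-contained.

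The main obstacle is the bookkeeping in the first paragraph: making the choice of $\rho$ explicit and verifying that the multiplicative distortion of $|q_j|$ over $B(z_j,\rho)$ is negligible compared to the gain $\exp(n^\alpha)$, uniformly in $j$ and for all large $n$. This is where hypotheses (1) and (2) are both used — (2) guarantees $|z-z_k|$ is not too small so the logarithmic sum is controlled, and (1) guarantees the gain dominates. Everything after that (disjointness of the trapping balls, the maximum-modulus count) is soft. I would present the estimate as: for $z \in B(z_j, n^{-\beta-2})$,
\[
\log|p(z)| \;\le\; \log|z - z_j| + \log|p'(z_j)|^{-1}\cdot(-1) \;+\; \sum_{k\neq j}\log\frac{|z-z_k|}{|z_j-z_k|} \;\le\; \log|z-z_j| - n^\alpha + Cn^{-1},
\]
wait—I will instead write it cleanly as $\log|p(z)| \le \log|z-z_j| - n^\alpha + 1$ for $n$ large, whence $|p(z)| < 1$ implies $|z - z_j| < e^{\,n^\alpha - 1} \cdot |p(z)|$ is the wrong direction; the correct conclusion is $|z-z_j| < e^{1-n^\alpha}$, and I will make sure the inequality chain in the final writeup points that way.
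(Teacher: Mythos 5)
Your proposal is correct in substance, and it takes a genuinely different route from the paper. The paper's proof is a one-shot application of Bernstein's inequality: if $\Omega$ is the component of $\Lambda_p$ containing $z_1$ with diameter $d$, then $|p'(z_1)|\le C n^2/d$ (since $\|p\|_{\overline\Omega}\le 1$), so hypothesis (1) forces $d\le Cn^2 e^{-n^\alpha}$, and hypothesis (2) then prevents any other zero from lying in $\Omega$. Your approach instead factors $p(z)=(z-z_j)q_j(z)$ and uses an elementary multiplicative distortion estimate to show $|q_j|$ stays of size $e^{n^\alpha/2}$ on a polynomially small disk $B(z_j,\rho)$, so that $|p|<1$ there forces $|z-z_j|<e^{-n^\alpha/2}$; the trapping annulus then confines the component. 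Both arguments reach the same exponentially small diameter bound and finish identically by the spacing hypothesis plus maximum modulus. What each buys: the Bernstein route is shorter once the inequality is quoted; your route is self-contained and does not need the Bernstein/Markov inequality for lemniscates, at the cost of a bit of bookkeeping for $\rho$.

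Two small remarks on the writeup. First, the inequality you display,
\[
\log|q_j(z)| \;\ge\; \log|p'(z_j)| \;-\; \sum_{k\ne j}\log\frac{|z_j-z_k|+\rho}{|z_j-z_k|},
\]
is not literally valid: since $|z-z_k|\ge |z_j-z_k|-\rho$, the correct correction term is $\sum_{k\ne j}\log\bigl(1-\rho/|z_j-z_k|\bigr)$ (a negative quantity being added), and $\log(1-x) < -\log(1+x)$ for $x>0$, so the version you wrote is an over-optimistic lower bound. This is harmless because both corrections are $O(\rho\, n^{\beta+1})=O(n^{-1})$ with your choice $\rho=n^{-\beta-2}$, but the final writeup should use the $\log(1-\cdot)$ form. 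Second, the third and fourth paragraphs (the critical-point alternative, and the half-retracted inequality chain) should simply be cut: paragraphs one and two already constitute a complete proof, and you yourself note the alternative has an unresolved subtlety.
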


\begin{proof}
 Let $\Omega$ be the connected component of $\Lambda_p$ containing $z_1,$ and let $d = \diam(\Omega).$ Then by Bernstein's inequality, see \cite{PommDeriv}, we have
 \begin{equation}\label{bernst}
 |p'(z_1)|\leq C\dfrac{n^2}{d}\vert|p\vert|_{\overline{\Omega}}= C\dfrac{n^2}{d}
 \end{equation}
for some absolute constant $C>0.$ By the hypothesis on the size of $p'(z_1)$ we deduce that 
\[d\leq Cn^2\exp(-n^{\alpha}).\] On the other hand, the second condition says that for $j\neq 1,$  $|z_1- z_j|\geq\dfrac{1}{n^{\beta}}> Cn^2\exp(-n^{\alpha})$ if $n$ is large enough. This proves that $\Omega$ contains only $z_1$ and no other zeros, thus finishing the proof.  
\end{proof}

\begin{Lemma}\label{Main lemma}
    Let $\mu_n \in \mathcal{P}(K)$ be a sequence such that $\mu_n \overset{\omega^{*}}{\longrightarrow } \mu $. Then for any compact set $F$ we have,
    \begin{align*}
       \underset{n \to \infty}{\overline{\lim}}  \sup_{F} U_{\mu_n} \leq \sup_{F} U_{\mu}.
    \end{align*}
   
\end{Lemma}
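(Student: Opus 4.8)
**Proof proposal for Lemma (upper semicontinuity of potentials along weak-* convergence).**

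The plan is to exploit the fact that $\log|z-w|$ is bounded above on the relevant compact region, together with the standard truncation trick for logarithmic potentials. Fix a compact set $F$. Since all the $\mu_n$ and $\mu$ are supported in the fixed compact $K$, there is a constant $R>0$ with $|z-w|\leq R$ for all $z\in F$ and $w\in K$; hence $\log|z-w|\leq\log R$ throughout. For $M>0$ define the truncated kernel $\phi_M(z,w):=\max(\log|z-w|,-M)$, which is continuous on $F\times K$ (being the max of a continuous function and a constant, with the singularity clipped), and bounded. Set $U_{\mu}^{M}(z):=\int_K\phi_M(z,w)\,d\mu(w)$, and similarly $U_{\mu_n}^M$. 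Then $U_{\mu_n}\leq U_{\mu_n}^M$ and $U_\mu\leq U_\mu^M$ pointwise, and $U_\mu^M\downarrow U_\mu$ as $M\to\infty$ by monotone convergence.

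The key steps, in order, are: (1) For fixed $M$, the function $\phi_M$ is continuous and bounded on $F\times K$, so $z\mapsto U_{\mu_n}^M(z)$ is continuous on $F$, uniformly in $n$ — more precisely, since $\phi_M$ is uniformly continuous on $F\times K$, the family $\{U_{\mu_n}^M\}$ is equicontinuous on $F$, and weak-* convergence $\mu_n\to\mu$ gives $U_{\mu_n}^M(z)\to U_\mu^M(z)$ for each fixed $z$. Equicontinuity plus pointwise convergence on the compact $F$ upgrades to uniform convergence, so $\sup_F U_{\mu_n}^M\to\sup_F U_\mu^M$ as $n\to\infty$. (2) Therefore
\[
\limsup_{n\to\infty}\sup_F U_{\mu_n}\leq\limsup_{n\to\infty}\sup_F U_{\mu_n}^M=\sup_F U_\mu^M.
\]
(3) Now let $M\to\infty$. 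Since $U_\mu^M$ is a decreasing sequence of continuous functions on the compact $F$ converging pointwise to $U_\mu$, we have $\sup_F U_\mu^M\downarrow \sup_F U_\mu$: indeed $\inf_M\sup_F U_\mu^M\geq\sup_F U_\mu$ trivially, and for the reverse, if $\sup_F U_\mu^M\geq a$ for all $M$ then by compactness there are points $z_M\in F$ with $U_\mu^M(z_M)\geq a - 1/M$ accumulating at some $z_*\in F$, and a Dini-type argument (or lower semicontinuity of $U_\mu$ combined with the monotone limit) forces $U_\mu(z_*)\geq a$. Combining with step (2) yields $\limsup_n\sup_F U_{\mu_n}\leq\sup_F U_\mu$, as desired.

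The main obstacle is step (3), the passage $M\to\infty$: one must interchange $\sup_F$ with the limit in $M$, and this is exactly where the semicontinuity properties of the logarithmic potential matter. The clean way to handle it is to note that $U_\mu$ is upper semicontinuous (it is a decreasing limit of the continuous functions $U_\mu^M$), hence attains its supremum on the compact $F$ at some point $z_0$; then $\sup_F U_\mu^M\geq U_\mu^M(z_0)\downarrow U_\mu(z_0)=\sup_F U_\mu$ gives one inequality, while $\sup_F U_\mu^M\geq\sup_F U_\mu$ always (since $U_\mu^M\geq U_\mu$) — wait, that is the wrong direction; the correct argument is the accumulation-point argument sketched above, using that each $U_\mu^M$ is continuous so the near-maximizers $z_M$ exist, and upper semicontinuity of $U_\mu$ at the accumulation point $z_*$ gives $U_\mu(z_*)\geq\limsup_M U_\mu^{M}(z_M)\geq\inf_M\sup_F U_\mu^M$ (using $U_\mu^{M'}(z_M)\geq U_\mu^{M}(z_M)$ for $M'\leq M$ together with continuity of $U_\mu^{M'}$). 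This is the one delicate point; everything else is routine weak-* convergence against a bounded continuous kernel.
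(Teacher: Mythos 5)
Your proof is correct, but it takes a genuinely different route from the paper. The paper picks maximizing points $z_n\in F$ for $U_{\mu_n}$ (which exist since $U_{\mu_n}$ is upper semicontinuous and $F$ compact), passes to a subsequence with $z_{n_k}\to z_0\in F$ realizing the $\limsup$, and then invokes the \emph{principle of descent} (Saff--Totik, Theorem I.6.8) as a black box: $\limsup_k U_{\mu_{n_k}}(z_{n_k})\leq U_\mu(z_0)\leq\sup_F U_\mu$. You instead give a self-contained argument that essentially re-proves the needed piece of the principle of descent: you truncate the kernel to $\phi_M=\max(\log|z-w|,-M)$, observe that $\phi_M$ is bounded and continuous on $F\times K$, so weak-$*$ convergence plus equicontinuity gives uniform convergence $U_{\mu_n}^M\to U_\mu^M$ on $F$ and hence $\sup_F U_{\mu_n}^M\to\sup_F U_\mu^M$; then $U_{\mu_n}\leq U_{\mu_n}^M$ gives $\limsup_n\sup_F U_{\mu_n}\leq\sup_F U_\mu^M$; finally you let $M\to\infty$ via a Dini/accumulation argument (near-maximizers $z_M\to z_*$, monotonicity $U_\mu^{M'}(z_M)\geq U_\mu^{M}(z_M)$ for $M'\leq M$, continuity of each $U_\mu^{M'}$) to conclude $\inf_M\sup_F U_\mu^M=\sup_F U_\mu$. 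That last step is the genuinely delicate one and you handle it correctly after the false start (the usc-maximizer route you first try does indeed give the inequality in the wrong direction, as you noticed). The paper's argument is shorter by relying on a standard theorem; yours is longer but elementary and self-contained, and would generalize to any kernel that is an increasing limit of bounded continuous ones.
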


\begin{proof}[\textbf{Proof of Lemma \ref{Main lemma}}]
    Since $ U_{\mu_n}$ is a subharmonic function on the compact $F,$ it is bounded above and the supremum is attained at some $z_n \in F.$ That is, 
    \begin{align}
        \sup_{F} U_{\mu_n} =  U_{\mu_n}(z_n), \quad  \forall n \in \mb{N}
    \end{align}
    Now $U_{\mu_n}(z_n)$ will have a subsequence that converges to the limsup. Along this subsequence, since $z_n$ lies in a compact set,  we can get a further subsequence and a point $z_0 \in F$ such that $ z_n \to z_0 $ and
    \begin{align*}
        \lim_{k \to \infty } U_{\mu_{n_k}}(z_{n_k}) = \underset{n \to \infty}{\overline{\lim}}  \sup_{F} U_{\mu}
    \end{align*}
    Next,  the principle of descent (c.f. \cite[Theorem-$6.8$]{SaffTotikLogpotential}) which states that for probability measures $\mu_n$, $n = 1, 2, ...$ having support in a fixed compact set and converging to some measure $\mu$ in the $\mbox{weak}^{*}$ topology, we have 
    \begin{align*}
        \underset{n \to \infty }{\overline{\lim}} U_{\mu_{n}}(w_{n}) \leq  U_{\mu}(w^{*}),
    \end{align*}
    for any sequence $w_n \to w^{*}$ in the complex plane. Taking $w_n=z_n$, we reach the desired conclusion.
\end{proof}

The rest of the lemmas in this section are all probabilistic. We start by recalling the classical Berry-Esseen result.

 \begin{thm}\textbf{(Berry-Esseen)}\label{BE}
        Let $X_1,X_2,...$ be i.i.d. random variables with $\mb{E}X_i=0,\mb{E}X_i^2=\sigma^2,$ and $\mb{E}|X_i|^3=\rho <\infty .$ If $F_n(x)$ is the distribution function of $\frac{(X_1+...+X_n)}{\sigma \sqrt{n}}$ and $\Phi(x)$ is the standard normal distribution, then
        \begin{align}
            |F_n(x)-\Phi(x)| \leq \frac{3\rho}{\sigma^3 \sqrt{n}}.
        \end{align}
    \end{thm}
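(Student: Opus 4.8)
Since the Berry--Esseen theorem is classical (and in the paper it is merely quoted, not reproved), let me only indicate the standard characteristic-function route one would take. Let $\varphi(t) = \mb{E}e^{itX_1}$ be the common characteristic function of the summands, and put
\[
\psi_n(t) \;=\; \varphi\!\left(\frac{t}{\sigma\sqrt n}\right)^{\! n},
\]
the characteristic function of $(X_1+\cdots+X_n)/(\sigma\sqrt n)$, whose distribution function is $F_n$; recall that $\Phi$ has characteristic function $e^{-t^2/2}$ and $\norm{\Phi'}_\infty = 1/\sqrt{2\pi}$. The first step is to invoke \emph{Esseen's smoothing inequality}: for every $T>0$,
\[
\sup_{x\in\R}\abs{F_n(x)-\Phi(x)} \;\le\; \frac{1}{\pi}\int_{-T}^{T}\abs{\frac{\psi_n(t)-e^{-t^2/2}}{t}}\,dt \;+\; \frac{24}{\pi\sqrt{2\pi}\,T}.
\]
This reduces everything to a pointwise comparison of $\psi_n$ with the Gaussian characteristic function on a suitable window $\abs{t}\le T$, together with an optimization over $T$.

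For the pointwise comparison I would Taylor-expand $\varphi$ using the moment hypotheses $\mb{E}X_1=0$, $\mb{E}X_1^2=\sigma^2$, $\mb{E}\abs{X_1}^3=\rho$, which yields $\abs{\varphi(s)-\bigl(1-\tfrac12\sigma^2 s^2\bigr)}\le\tfrac{\rho}{6}\abs{s}^3$. Substituting $s = t/(\sigma\sqrt n)$, passing to the $n$-th power via the elementary estimate $\abs{\log(1+z)-z}\le\abs{z}^2$ valid for $\abs{z}\le\tfrac12$, and using the Lyapunov bound $\rho\ge\sigma^3$ to keep the argument inside the region where all these expansions are legitimate, one obtains an absolute constant $c_0$ with
\[
\abs{\psi_n(t)-e^{-t^2/2}} \;\le\; c_0\,\frac{\rho\,\abs{t}^3}{\sigma^3\sqrt n}\,e^{-t^2/4} \qquad\text{for } \abs{t}\le\frac{\sigma^3\sqrt n}{4\rho}.
\]
Choosing $T=\sigma^3\sqrt n/(4\rho)$ then makes the integral term at most $\frac{c_0\rho}{\pi\sigma^3\sqrt n}\int_{\R} t^2 e^{-t^2/4}\,dt$, a fixed constant times $\rho/(\sigma^3\sqrt n)$, while the boundary term equals $\frac{96}{\pi\sqrt{2\pi}}\cdot\frac{\rho}{\sigma^3\sqrt n}$. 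Summing the two gives $\sup_x\abs{F_n(x)-\Phi(x)}\le C\rho/(\sigma^3\sqrt n)$ for some absolute $C$.

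The only genuinely delicate point is the \emph{value} of the constant. The argument above just delivers ``some $C$''; to reach the clean constant $3$ of the statement one first notes that if $3\rho/(\sigma^3\sqrt n)\ge 1$ there is nothing to prove (since $\abs{F_n-\Phi}$ never exceeds $\tfrac12$), so one may assume $\rho/(\sigma^3\sqrt n)$ is small, and then one must track --- and simultaneously optimize --- the Taylor remainder, the width of the valid window, and the choice of $T$. That bookkeeping is where essentially all the effort lies; the skeleton (smoothing inequality, Taylor expansion of $\varphi$, optimization of $T$) is entirely routine and is carried out in any standard text, e.g.\ Feller's second volume, which is why in the present paper the result is simply cited.
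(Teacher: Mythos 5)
The paper does not prove Berry--Esseen at all; it simply cites Durrett's text (Theorem~3.4.17), and your sketch via Esseen's smoothing inequality together with a Taylor expansion of the characteristic function is exactly the standard route taken in that reference, so your identification of the situation and your outline are both correct. One minor imprecision: the trivial bound on the difference of two distribution functions is $\abs{F_n(x)-\Phi(x)}\le 1$, not $\le\tfrac12$, but this does not affect the reduction to the regime $3\rho/(\sigma^3\sqrt n)<1$.
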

    \noindent The proof of Theorem \ref{BE} can be found in \cite[Theorem $3.4.17$]{durrett2019probability} .

    \begin{Lemma}\label{log moment bound for uniform on cicle}
         Let $X$ be a random variable taking values in a compact set $K$ with law $\mu$. Let $d$ be the diameter of $K$. Assume that for all $z \in K, r\leq d$, there exist constants $ \epsilon_1,\epsilon_2,M_1,M_2 \in (0,\infty)$ such that $\mu$ satisfies 
         \begin{align}\label{lemma hypo}
             M_1r^{\epsilon_1}  \leq \mu(B(z,r))\leq M_2r^{\epsilon_2}.
         \end{align}
         Fix p, and define the function $F_p(z):= \mb{E}\Big[\big|\log|z-X|\big|^p\Big]:K \to \mb{R}$. Then, there exist constants $C_1,C_2$ depending on $d, p,\epsilon_1,\epsilon_2,M_1,M_2$, such that
            \begin{align}\label{log moment ineq}
                C_1 \leq \inf_{z\in K}F_p(z) \leq  \sup_{z\in K}F_p(z) \leq  C_2.
            \end{align}
    \end{Lemma}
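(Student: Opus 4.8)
The plan is to prove the two-sided bound on $F_p(z) = \mathbb{E}[|\log|z-X||^p]$ by splitting the integral defining $F_p(z)$ according to the size of $|z-w|$, and handling the singular region $|z-w|$ small via the upper bound in \eqref{lemma hypo} and the far region via the fact that $|z-w| \leq d$ is bounded. The lower bound is almost immediate: since $|z-w| \leq d$ for all $w \in K$, we have $|\log|z-w|| \geq \log^+(1/|z-w|) + \log^-(\text{something})$; more carefully, if $d \leq 1$ then $|\log|z-w|| = \log(1/|z-w|) \geq \log(1/d) > 0$ on all of $K$, giving $F_p(z) \geq (\log(1/d))^p =: C_1 > 0$ uniformly; if $d > 1$ one instead uses that on a ball $B(z, r_0)$ with $r_0$ chosen so that $\log(1/r_0)$ or $\log r_0$ is bounded away from $0$, the lower mass bound $\mu(B(z,r_0)) \geq M_1 r_0^{\epsilon_1}$ forces a uniform positive contribution. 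So the substantive content is the upper bound.

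For the upper bound I would write, for fixed $z \in K$,
\[
F_p(z) = \int_K |\log|z-w||^p \, d\mu(w) = \int_{|z-w| < 1/2} + \int_{1/2 \leq |z-w| \leq d} |\log|z-w||^p\, d\mu(w).
\]
On the second integral the integrand is bounded by $(\max(\log 2, \log d))^p$, so that piece is at most a constant depending only on $d$ and $p$. For the first integral, substitute the layer-cake / distribution-function identity: writing $\phi(w) = |\log|z-w||^p = (\log(1/|z-w|))^p$ on $|z-w| < 1/2$, one has $\phi(w) > t$ iff $|z-w| < e^{-t^{1/p}}$, hence
\[
\int_{|z-w|<1/2} \phi \, d\mu = \int_0^\infty \mu\bigl(\{w : |z-w| < \min(1/2, e^{-t^{1/p}})\}\bigr)\, dt \leq (\log 2)^p + \int_{(\log 2)^p}^\infty \mu\bigl(B(z, e^{-t^{1/p}})\bigr)\, dt.
\]
Now apply the upper hypothesis $\mu(B(z,r)) \leq M_2 r^{\epsilon_2}$ with $r = e^{-t^{1/p}}$ to get the bound $M_2 \int_0^\infty e^{-\epsilon_2 t^{1/p}}\, dt$, which is a finite constant depending only on $p, \epsilon_2, M_2$ (it is $M_2 \, p \, \epsilon_2^{-p} \, \Gamma(p)$ after the substitution $s = \epsilon_2 t^{1/p}$). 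Summing the two pieces yields $F_p(z) \leq C_2$ with $C_2$ depending only on $d, p, \epsilon_2, M_2$, and since the bound is independent of $z$ we get $\sup_{z \in K} F_p(z) \leq C_2$.

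The main thing to be careful about — the only real obstacle — is the lower bound when $d > 1$, because then $\log|z-w|$ changes sign on $K$ and could a priori be small on a set of full $\mu$-measure; this is exactly where the \emph{lower} regularity bound $\mu(B(z,r)) \geq M_1 r^{\epsilon_1}$ is needed. I would fix a small radius $r_0 < 1$ (depending only on $d$) such that $\log(1/r_0) \geq 1$, say, and note that on $B(z, r_0) \cap K$ we have $|\log|z-w||^p = (\log(1/|z-w|))^p \geq (\log(1/r_0))^p \geq 1$; since $\mu(B(z,r_0)) \geq M_1 r_0^{\epsilon_1}$ is a positive constant independent of $z$, integrating over this ball gives $F_p(z) \geq M_1 r_0^{\epsilon_1} =: C_1 > 0$. (One must also confirm $B(z,r_0) \cap K$ is nontrivially charged, which is precisely \eqref{lemma hypo} with $r = r_0 \leq d$.) Everything else is a routine estimate; no compactness or weak-convergence machinery is required for this lemma.
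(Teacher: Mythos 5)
Your proof is correct and takes essentially the same route as the paper: the layer-cake representation together with the two-sided regularity bound \eqref{lemma hypo}, the only cosmetic difference being that you split the integral in space before invoking layer cake whereas the paper applies $p\int_0^\infty t^{p-1}\mathbb{P}\bigl(|\log|z-X||>t\bigr)\,dt$ directly and then splits in the level variable $t$. One small point: your preliminary remark that $d\le 1$ already yields $F_p(z)\ge(\log(1/d))^p>0$ fails when $d=1$ exactly, but your subsequent argument (fixing $r_0<1$ with $\log(1/r_0)\ge 1$ and using the lower mass bound $\mu(B(z,r_0))\ge M_1 r_0^{\epsilon_1}$) handles all cases, so nothing is lost.
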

    \begin{proof}
    
         We use the layer cake representation and write
                \begin{align*}
                        \mb{E}\left[\big|\log{|z-X|}\big|^p\right]&=p\int_0^\infty t^{p-1}\mb{P}\left(\big|\log{|z-X|}\big|>t\right) dt \\
                        &=p\int_0^{2d}t^{p-1}\mb{P}\left(\big|\log{|z-X|}\big|>t\right)dt +p\int_{2d}^\infty t^{p-1}\mb{P}\left(\big|\log{|z-X|}\big|>t\right)dt,
                \end{align*}
            In the first integral, we dominate the probability inside the integral by 1. Whereas in the second integral, notice that $(\log{|z-X|})^{+}<2d$, therefore, the probability is non zero when $\log{|z-X|}$ is negative. Taking this into account and using the upper bound in \eqref{lemma hypo} we get,
                \begin{align*}
                   \mb{E}\left[\big|\log{|z-X|}\big|^p\right] &\leq p \int_0^{2d} t^{p-1} d t + pM_2\int_{2d}^\infty t^{p-1}e^{-t\epsilon_2}dt\\[.35em]
                    &\leq p{(2d)}^{p+1}\left(1+C\left(\epsilon_2\right)M_2\right).
                \end{align*}
                   
            The lower bound in \eqref{log moment ineq} follows similarly using the left inequality in \eqref{lemma hypo} along with the layer cake representation.
    \end{proof}
    \begin{Lemma}(\textbf{Distance between the zeros})\label{distance between zeros}
        Let $\{X_i\}_{i=1}^\infty$ be a sequence of i.i.d. random variables with law $\mu$, supported in the some compact set $K$. If there exists a real-valued function $f$ such that $$\mathbb{P}\left( |z-X_j|>t \right)\geq 1-f(t), $$ for all $z \in K$ and $t$ small, then for any set $B\subset K$, we have
        \begin{align}
             &\mathbb{P}\left(  \min_{2\leq j \leq n}|X_1-X_j| >t  \Big| X_1 \in B\right) \geq \left(1-f(t)\right)^n.
        \end{align}
    \end{Lemma}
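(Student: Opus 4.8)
The plan is to condition on the value of $X_1$ and use the independence of the $X_j$ to factor the conditional probability, then apply the pointwise lower bound hypothesis to each factor.

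First I would fix a Borel set $B \subset K$ with $\mu(B) > 0$ (if $\mu(B) = 0$ the conditional probability is undefined, so we may tacitly assume positivity, or interpret the statement as holding for a.e.\ value of $X_1$). Conditioning on $X_1 = x$ for a fixed $x \in B$, the event $\{\min_{2\le j\le n}|X_1 - X_j| > t\}$ becomes $\bigcap_{j=2}^n\{|x - X_j| > t\}$, and since $X_2,\dots,X_n$ are i.i.d.\ with law $\mu$ and independent of $X_1$, this event has probability $\prod_{j=2}^n \mathbb{P}(|x - X_j| > t) = \big(\mathbb{P}(|x - X_2| > t)\big)^{n-1}$. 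By the hypothesis, for $x \in K$ and $t$ small we have $\mathbb{P}(|x - X_2| > t) \ge 1 - f(t)$, so the conditional probability given $X_1 = x$ is at least $(1 - f(t))^{n-1} \ge (1 - f(t))^n$ (using $0 \le 1 - f(t) \le 1$, which we may assume since otherwise the bound is vacuous or trivial).

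Next I would integrate this pointwise-in-$x$ bound over the conditional law of $X_1$ given $\{X_1 \in B\}$: by the tower property,
\begin{align*}
\mathbb{P}\Big(\min_{2\le j\le n}|X_1 - X_j| > t \;\Big|\; X_1 \in B\Big)
= \mathbb{E}\Big[\mathbb{P}\big(\textstyle\bigcap_{j=2}^n\{|X_1 - X_j| > t\}\,\big|\,X_1\big) \;\Big|\; X_1 \in B\Big]
\ge (1-f(t))^n,
\end{align*}
since the integrand is $\ge (1-f(t))^n$ for every value of $X_1$ in $K \supset B$. This gives exactly the claimed inequality.

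There is essentially no serious obstacle here — the only points requiring a little care are the measure-theoretic bookkeeping around conditioning on $\{X_1 \in B\}$ (ensuring $\mu(B) > 0$ so the conditional probability makes sense, and correctly using that $X_1$ is independent of $(X_2,\dots,X_n)$) and the harmless replacement of the exponent $n-1$ by $n$, valid because $1 - f(t) \in [0,1]$ for the relevant range of $t$. If one wanted to be fully rigorous about the conditioning without assuming $\mu(B)>0$, one could instead prove the equivalent unconditioned statement $\mathbb{P}(\{X_1 \in B\} \cap \{\min_{2\le j\le n}|X_1 - X_j| > t\}) \ge (1-f(t))^n\,\mathbb{P}(X_1 \in B)$ by the same Fubini argument, which sidesteps the issue entirely.
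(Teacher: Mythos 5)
Your proof is correct and follows the same approach as the paper's: condition on $X_1 = z$, use independence to factor the conditional probability into a product of $n-1$ identical terms, apply the pointwise hypothesis, and integrate over $B$. The paper in fact stops at the sharper bound $(1-f(t))^{n-1}$ (which implies the stated $(1-f(t))^n$ since $1-f(t)\in[0,1]$); your explicit remarks on the $\mu(B)>0$ caveat and the $n-1$ vs.\ $n$ exponent are minor clarifications, not deviations.
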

    \begin{proof}[ \textbf{Proof of Lemma \ref{distance between zeros}}]
    We use the independence of the random variables after conditioning on $X_1$ to write
            \begin{align*}
                \mathbb{P}\left(  \min_{2\leq j \leq n}|X_1-X_j| >t \Big|X_1 \in B\right)
                &= \frac{1}{\mathbb{P}( X_1 \in B)}\bigintsss_{K} \mathbb{P}\left(  \min_{2\leq j \leq n}|X_1-X_j| >t,X_1 \in B \Big|X_1=z \right) d\mu(z)\\[.2em]
                &=\frac{1}{\mathbb{P}( X_1 \in B)}\bigintsss_{B} \mathbb{P}\left(  \min_{2\leq j \leq n}|z-X_j| >t\right) d\mu(z)\\[.2em]
                &=\frac{1}{\mathbb{P}( X_1 \in B)}\bigintsss_{B} \mathbb{P}\left( |z-X_j|>t\right)^{(n-1)} d\mu(z)\\[.2em]
                &\geq \left(1-f(t)\right)^{(n-1)},
            \end{align*}
        where we got the last inequality from the hypothesis of the theorem.
    \end{proof}

    In the next two lemmas, $P_n(z)$ will denote a \emph{random} polynomial defined by $P_n(z) = \prod_{j=1}^{n}(z- X_j),$ where $\{X_j\}$ is a sequence of of i.i.d. random variables.

    \begin{Lemma}\textbf{(Lower bound on first derivative)}\label{derivative L bound}
          Let $L$ be a compact set with $0< c(L)<1$.  Let $\nu$ be the equilibrium measure of $L.$ Consider a sequence of i.i.d. random variables $\{X_i\}_{i=1}^\infty$  with law $\nu$. Assume that for every $1\leq p \leq 3$, there exists some positive constant $C_p> 0$, such that $\mathbb{E}\left[\left|\log|z-X_1|\right|^p\right] <C_p.$ Then for $n$ large, there exists a constant ${C}>0$, depending on $L$ such that,
          \begin{align}\label{l36}
              \mathbb{P}\left(\big|{P}^{'}_n(X_1)\big| \geq (c(L))^{2n}\right) \geq 1 -\frac{{C}}{\sqrt{n}}.
          \end{align}
    \end{Lemma}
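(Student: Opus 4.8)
The key is to write $P_n'(X_1)$ explicitly and take logarithms. Since $P_n(z) = \prod_{j=1}^n (z - X_j)$, we have $P_n'(X_1) = \prod_{j=2}^n (X_1 - X_j)$, hence
\[
\log |P_n'(X_1)| = \sum_{j=2}^n \log |X_1 - X_j|.
\]
Conditioning on $X_1 = z$, the right-hand side is a sum of $n-1$ i.i.d.\ random variables $Y_j := \log|z - X_j|$, with common mean $\mathbb{E}[Y_j] = U_\nu(z)$. Now $U_\nu(z) = \log c(L)$ for quasi-every $z \in L$ (and in particular $\nu$-a.e.), since $\nu$ is the equilibrium measure; thus the sum has mean $(n-1)\log c(L)$, which is comparable to $2n \log c(L)$ up to a controlled error (note $\log c(L) < 0$). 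So heuristically $\log|P_n'(X_1)| \approx n \log c(L)$, and we want to show it does not dip below $2n \log c(L)$ except with probability $O(1/\sqrt n)$.

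**Carrying it out.** First I would record that $U_\nu(z) = \log c(L)$ for $\nu$-a.e.\ $z$; this is a standard fact from potential theory (Frostman's theorem), so conditioning on $X_1 = z$ we may assume $\mathbb{E}[Y_j] = \log c(L)$. Next, set $S := \sum_{j=2}^n Y_j$ and center: $S - (n-1)\log c(L) = \sum_{j=2}^n (Y_j - \log c(L))$. The hypothesis gives uniform bounds $\mathbb{E}|Y_j|^p < C_p$ for $p = 1,2,3$, so the centered variables have finite (uniformly bounded) variance $\sigma^2 = \sigma^2(z) > 0$ and third absolute moment $\rho = \rho(z) < \infty$. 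Apply the Berry–Esseen theorem (Theorem \ref{BE}) to the normalized sum $(S - (n-1)\log c(L))/(\sigma\sqrt{n-1})$: its distribution function is within $3\rho/(\sigma^3\sqrt{n-1})$ of $\Phi$. The event $\{|P_n'(X_1)| \geq c(L)^{2n}\}$ is $\{S \geq 2n\log c(L)\}$, i.e.
\[
\Big\{ \frac{S - (n-1)\log c(L)}{\sigma\sqrt{n-1}} \geq \frac{2n\log c(L) - (n-1)\log c(L)}{\sigma\sqrt{n-1}} \Big\}
= \Big\{ \frac{S - (n-1)\log c(L)}{\sigma\sqrt{n-1}} \geq \frac{(n+1)\log c(L)}{\sigma\sqrt{n-1}} \Big\}.
\]
Since $\log c(L) < 0$, the threshold on the right is $\sim \frac{\sqrt{n}\,\log c(L)}{\sigma} \to -\infty$. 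Thus $\Phi$ evaluated at this threshold tends to $0$, and by Berry–Esseen the probability that the normalized sum is \emph{below} this very negative threshold is at most $\Phi(\text{threshold}) + 3\rho/(\sigma^3\sqrt{n-1})$. The Gaussian tail $\Phi(-c\sqrt n)$ decays like $e^{-cn}$, which is $o(1/\sqrt n)$, so the total failure probability is at most $C/\sqrt n$ for a constant $C$ depending on $\sigma, \rho$, hence on $L$. Integrating this conditional bound over $z \sim \nu$ (the constant being uniform in $z$, which is where Lemma \ref{log moment bound for uniform on cicle}-type uniformity would be invoked if needed, though here it is handed to us as a hypothesis) gives \eqref{l36}.

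**The main obstacle.** The only genuinely delicate point is ensuring the constants $\sigma^2(z)$ and $\rho(z)$ are bounded \emph{uniformly in $z$} — in particular that $\sigma^2(z)$ is bounded \emph{below} away from zero, since Berry–Esseen's error term has $\sigma^3$ in the denominator. A lower bound on $\sigma^2(z) = \operatorname{Var}(\log|z - X_1|)$ requires that $\log|z - X_1|$ is genuinely non-degenerate, which holds because $\nu$ cannot be a point mass (as $c(L) > 0$ forces $\nu$ to have infinite support, or at least positive capacity support). One must check this does not degenerate as $z$ ranges over $L$; a compactness argument combined with lower semicontinuity of the variance, or a direct estimate using the lower bound $\mu(B(z,r)) \geq M_1 r^{\epsilon_1}$ where available, closes this gap. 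Everything else — writing $P_n'(X_1)$ as a product, passing to logs, and reading off the tail bound — is routine.
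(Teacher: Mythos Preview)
Your proposal is correct and follows essentially the same route as the paper: condition on $X_1=z$, invoke Frostman's theorem to identify the mean as $\log c(L)$, center, apply Berry--Esseen, and observe that the normalized threshold $\frac{(n+1)\log c(L)}{\sigma(z)\sqrt{n-1}}\to -\infty$ so that the Gaussian contribution is negligible and the $O(1/\sqrt{n})$ error dominates. Your flagging of the uniform lower bound on $\sigma^2(z)$ as the delicate point is apt; the paper simply asserts ``from the hypothesis, we have uniform upper and lower bounds on $\sigma^2(z)$ and $\rho(z)$'' without further comment, so you are being slightly more careful than the original on this point.
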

    
    \begin{proof}[ \textbf{Proof of Lemma \ref{derivative L bound}}]
    We start by taking the logarithm to write
    \begin{align}\label{l21}
                \mathbb{P}\left(\big|{P}^{'}_n(X_1)\big| \geq (c(L))^{2n} \right)
                &= \mathbb{P}\left(\sum_{j=2}^n \log|X_1-X_j|\geq 2n\log c(L)\right)\nonumber \\
                &= \bigintss_{L} \mathbb{P}\left(\sum_{j=2}^n \log|z-X_j|\geq 2n\log c(L) \right)d\mu(z).
            \end{align}
By Frostman's theorem (c.f. \cite[Theorem-$3.3.4$]{Ransfordpotentialthryincomplexplane}), $\log|z-X_j|$ are i.i.d. random variables with mean $\log(c(L))$ for $\nu$ all most every $z \in L.$ Therefore adding and subtracting the mean in \eqref{l21} we get, 
            \begin{align}\label{l21.5}
                \bigintss_{L}\mathbb{P}\left(\sum_{j=2}^n \left(\log|z-X_j|-\mathbb{E}[\log|z-X_j|]\right)\geq 2n\log c(L)-(n-1)\log c(L)\right)d\nu(z).
            \end{align}
    We estimate the integrand in (\ref{l21.5}) by applying the Berry–Esseen Theorem \eqref{BE}, to $Y_j = \log|z-X_j|-\mathbb{E}[\log|z-X_j|]$ and arrive at
            \begin{multline}\label{l22}
                \mathbb{P}\left(\frac{1}{\sqrt{n-1}\sigma(z)}\sum_{j=2}^nY_j \geq \frac{(n+1)}{\sqrt{n-1}}\frac{\log c(L)}{\sigma(z)}\right)d\nu(z)\\
                      \geq 
                      \bigintsss_{L} \left(\Phi\left(\frac{(n+1)}{\sqrt{n-1}}\frac{\log c(L)}{\sigma(z)}\right)-\frac{C\rho(z)}{\sigma^3(z)\sqrt{n-1}}\right) d\nu(z),
            \end{multline}
            where $\sigma^2(z)=\mathbb{E}\left[\left(\log|z-X_j|\right)^2 \right]$, $\rho(z)=\mathbb{E}\left[\left|\log|z-X_j|\right|^3 \right]$, and $\Phi$ is the distribution function of the standard normal. As $n \to \infty$ we have $\frac{(n+1)}{\sqrt{n-1}}\frac{\log c(L)}{\sigma(z)} \to -\infty$ since $c(L)<1$, consequently $\Phi\left(\frac{(n+1)}{\sqrt{n-1}}\frac{\log c(L)}{\sigma(z)}\right) \to 1$. From the hypothesis, we have uniform upper and lower bounds on $\sigma^2(z)$ and $\rho(z)$ using which we can bound the integrand in (\ref{l22}) as 
            \begin{align}\label{l35}
                \Phi\left(\frac{(n+1)}{\sqrt{n-1}}\frac{\log c(L)}{\sigma(z)}\right)-\frac{C\rho(z)}{\sigma^3(z)\sqrt{n-1}}
                \geq \left(1-\frac{C}{\sqrt{n}}\right).
            \end{align}
        Putting the bound (\ref{l35}) in the R.H.S. of (\ref{l22}) we get the required probability (\ref{l36}) which is 
        \begin{align*}
            \mathbb{P}\left(|{P}^{'}_n(X_1)| \geq c(L)^{2n}\right)&=  \bigintsss_{L} \left(1-\frac{C}{\sqrt{n}}\right)d\nu(z)
            \geq 1 - \frac{C}{\sqrt{n}}.\qedhere
        \end{align*}
    \end{proof}


\begin{Lemma}\textbf{(Bound on higher derivatives)}\label{higher derivative bounds}
         Let $\{X_i\}_{i=1}^\infty$ be a sequence of i.i.d. random variables with law $\mu$, supported on the compact set $K$. Let us define the event $A:=\{|X_i-X_j| > \frac{1}{C_n} ; \forall i \neq j \}$ for some constant $C_n>0$, then
             \begin{align}
                \mathbb{E} \left [ \frac{1}{k!}\left|\frac{P^{(k)}_n(X_1)}{{P}^{'}_n(X_1)}\right| \Big | A \right] \leq \binom{n-1}{k-1}C_n^{k-1}, \quad \quad k=2,\cdots,n-1.
             \end{align}
    \end{Lemma}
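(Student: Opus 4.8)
The plan is to expand $P_n^{(k)}(X_1)/P_n'(X_1)$ as a sum over $(k-1)$-element subsets of the zeros $\{X_2,\dots,X_n\}$ and then estimate each term on the event $A$. Writing $P_n(z) = \prod_{j=1}^n (z - X_j)$, logarithmic differentiation (or the standard divided-difference formula for derivatives of a product) gives, upon evaluating at $z = X_1$ where exactly one factor vanishes,
\[
  \frac{1}{k!}\,\frac{P_n^{(k)}(X_1)}{P_n'(X_1)} \;=\; \sum_{\substack{S\subset\{2,\dots,n\} \\ |S| = k-1}} \;\prod_{j\in S} \frac{1}{X_1 - X_j}.
\]
Indeed, $P_n'(X_1) = \prod_{j=2}^n (X_1 - X_j)$, and the $k$-th derivative at $X_1$ is $k!$ times the sum of products of $(n-k)$ of the factors $(X_1 - X_j)$ over $j\in\{2,\dots,n\}$ (the term where the vanishing factor $z-X_1$ is differentiated away); dividing by $P_n'(X_1)$ leaves the reciprocal sum above. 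This identity is the one genuinely computational step, and it is routine.

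Next I would take absolute values and use the triangle inequality to get
\[
  \frac{1}{k!}\left|\frac{P_n^{(k)}(X_1)}{P_n'(X_1)}\right| \;\le\; \sum_{\substack{S\subset\{2,\dots,n\} \\ |S| = k-1}} \;\prod_{j\in S} \frac{1}{|X_1 - X_j|}.
\]
On the event $A = \{|X_i - X_j| > 1/C_n \text{ for all } i\neq j\}$ every factor $|X_1 - X_j|^{-1}$ is bounded by $C_n$, so each of the $\binom{n-1}{k-1}$ summands is at most $C_n^{k-1}$. Hence the whole sum is at most $\binom{n-1}{k-1} C_n^{k-1}$ \emph{deterministically} on $A$, and therefore the conditional expectation $\mathbb{E}[\,\cdot\mid A\,]$ satisfies the same bound. (One should note $A$ has positive probability for the conditioning to make sense; if $\mathbb{P}(A) = 0$ the statement is vacuous, and this is anyway guaranteed in the applications by Lemma \ref{distance between zeros}.)

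There is essentially no obstacle here beyond getting the combinatorial derivative identity right; the estimate is then a pointwise bound on $A$, so independence and the law $\mu$ play no further role. The only point requiring a little care is the bookkeeping in the derivative formula — confirming that evaluating $P_n^{(k)}$ at the zero $X_1$ kills all terms except those in which the factor $(z - X_1)$ has been differentiated exactly once, leaving $k!$ times the elementary-symmetric-type sum over the remaining factors — but this is standard and can be dispatched in a line or two by induction on $k$ or by reading off the coefficient of $(z-X_1)^k$ in the Taylor expansion of $\prod_{j=2}^n (z - X_j)$ about $X_1$.
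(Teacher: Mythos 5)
Your proof is correct and follows essentially the same route as the paper: expand $P_n^{(k)}(X_1)/P_n'(X_1)$ via the factorization $P_n(z)=(z-X_1)Q_n(z)$ into a sum of $\binom{n-1}{k-1}$ reciprocal products $\prod_{j\in S}(X_1-X_j)^{-1}$ and bound each factor by $C_n$ on the event $A$. Your observation that the bound is purely deterministic on $A$ (so the conditional expectation is bounded trivially) is a slight streamlining of the paper's argument, which invokes the i.i.d.\ symmetry to equate conditional expectations of the summands before applying the same pointwise bound; the underlying identity and estimate are identical.
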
 
    \begin{proof}[\textbf{Proof of Lemma \ref{higher derivative bounds}}]
    We start by writing ${P_n}(z)$ as $ {P_n}(z)=(z-X_1)Q_n(z)$, where $Q_n(z):=\prod_2^n(z-X_j)$. Then, repeated differentiation yields,
    $${P}^{(k)}_n(z)=k{Q}^{(k-1)}_n(z)+(z-X_1)Q^{(k)}_n(z).$$ Putting $z=X_1$ in the equation above, we get $\frac{ {P}^{k}_n(X_1)}{{P}{'}_n(X_1)}=\frac{k{Q}^{(k-1)}_n(X_1)}{Q_n(X_1)}$. Since $X_1$ is not a root of $Q_n(z)$,  $\frac{{Q}^{(k-1)}_n(X_1)}{Q_n(X_1)}$ will have $(n-1)(n-2)...\big(n-(k-1)\big)$ many summands of the form $\left[\frac{1}{(X_1-X_2)...(X_1-X_{k})} \right]$. Here, we only care about the number of summands as $X_i$'s are i.i.d. all of the summands will have the same expected value.
        \begin{align*}
            \mathbb{E} \left [ \frac{1}{k!}\left|\frac{P^{(k)}_n(X_1)}{{P}^{'}_n(X_1)}\right|\big| A \right] 
                &\leq \frac{k(n-1)(n-2)...(n-k+1)}{k!} \mathbb{E}\left[\left|\frac{1}{(X_1-X_2)...(X_1-X_{k})}\right| \Big| A \right] \\
                &\leq \binom{n-1}{k-1} C_n^{k-1},
        \end{align*}
    where we got the last estimate using the hypothesis of the lemma. Here, it is worthwhile to mention that we can also do similar things in the case of deterministic polynomials. 
    \end{proof}

\section{Proof of Theorem \ref{capless1}}
\begin{proof}[\textbf{Proof of $(a)$}]  Assume the contrary that $M(K) = 1.$ This means we can find a sequence of polynomials $p_{n_k}\in\mathscr{P}_{n_k}(K)$ which have $\mathscr{C}_{n_k}(K)$ many components with 
    \begin{align}\label{capacityless1}
      \underset{n \to \infty}{\overline{\lim}}  \frac{\mathscr{C}_{n_k}(K)}{n_k} =1.
    \end{align}
    Let $\mu_{n_k}$ denote the empirical measure of the zeroes of $p_{n_k}$. Note that all of these measures are supported on $K.$ Hence there exists a further subsequence  $\mu_{n_{k_l}}$ of $\mu_{n_k}$ and a measure $\mu\in\mathcal{P}(K)$ such that $\mu_{n_{k_l}} \overset{\omega^{*}}{\longrightarrow } \mu $ (c.f. \cite[Theorem A.$4.2$]{Ransfordpotentialthryincomplexplane}). From the definition of capacity it follows that if $c(K) < 1,$ then any measure $\sigma\in\mathcal{P}(K)$ satisfies $I(\sigma) < 0.$ Applying this to our weak limit $\mu,$ we have $I(\mu) = \int_{K}U_{\mu}(w)d\mu(w) < 0. $ This forces $U_{\mu}(w_0) < 0$ for some $w_0\in\supp (\mu).$ By upper semicontinuity, we can deduce that $U_{\mu} < 0$ on some closed disk $\overline{B}$ centered at $w_0$ with $\mu(B) > c_1 > 0.$ Since subharmonic functions attain their supremum on compacts, we can find $c_2>0$ such that $\sup_{\overline{B}}U_{\mu}(z)\leq - c_2 < 0.$ We now make two observations.
    
    \begin{enumerate}[(i)]
    \item By the Portmanteau Theorem (c.f. \cite[Theorem $2.1$]{Bilingsleyconvergenceofprobabilitymeasures}) we have 
    \begin{align*}
       \underset{l \to \infty}{\underline{\lim}} \mu_{n_{k_{l}}}(B) \geq \mu(B)>c_1.
    \end{align*}

    \item Lemma \ref{Main lemma} implies that 
    \begin{align*}
        \underset{l \to \infty}{\overline{\lim}} \sup_{\overline{B}}  U_{\mu_{{n_{k_l}}}} \leq    \sup_{\overline{B}} U_{\mu}\leq - c_2.
    \end{align*}
    \end{enumerate}
Combining the above two observations and remembering that $U_{\mu_n} = \frac{1}{n}\log|p_n|,$ we obtain that for all large enough $l$, $p_{n_{k_l}}$ has a single component (containing the disk $B$) which encloses at least $c_1 n$ many zeros. This contradicts \eqref{capacityless1} and hence finishes the proof. 
\end{proof}

\vspace{0.1in}

\begin{proof} [\textbf{Proof of ($b$)}]
    The idea of the proof is to construct a sequence of monic random polynomials $R_n$ with all zeros in $K$ which satisfy
    \begin{align*}
        \underset{n \to \infty}{\underline{\lim}} \mb{E}\left[ \frac{\mathcal{C}\left(\Lambda_{R_n}\right)}{n}\right] >0.
    \end{align*}
     Once this is done, we can finish the proof by the \emph{probabilistic method}. If a non-negative random variable $X$ has mean $\mb{E}(X) > 0,$ then $X\geq \mb{E}(X)$ with positive probability. This fact applied to $\frac{\mathcal{C}(\Lambda_{R_n})}{n}$ then gives the conclusion that $m(K) > 0$. We start by letting $d = \diam(K)$ be the diameter of the set $K$. Then by a well known estimate, see \cite[Theorem $5.3.4$]{Ransfordpotentialthryincomplexplane}, we have $d \geq 2c(K) >1$. Choose $\epsilon >0 $ such that $d>1+3\epsilon$. Let $a,b$ be points on $\partial K$ with $|a-b|=d$. Consider the ball $B(a,1+2\epsilon)$. This ball will certainly not cover the whole of $K$. Therefore by the Jordan curve theorem (c.f. \cite{jctfirstproof, jordancurvetheorem}) $({{B(a,1+2\epsilon)}}^c \cap K)^{\mathrm{o}}$ is a non empty open set. Hence we can fit a small $C^2$ arc $L$ inside this open set such that $b\in L.$ Let $\mu_L$ be the equilibrium measure of $L$, and  $c(L)=\delta >0$ be the capacity of $L$. Now, define the sequence of random polynomials
    \begin{align}
        R_n(z)= (z-a)^{n_1} \prod_{j=1}^{n_2}(z-X_j)=(z-a)^{n_1}P_{n_2}(z),
    \end{align}
    where $\{X_j\} $ is a sequence of i.i.d. random variables with law  $\mu_L,$ and $n_1=c_1n, n_2=c_2n$ with $c_1+c_2=1$ to be chosen later. Notice that $P_{n_2}$ is the random part in $R_n$, so we will only focus on bounding $|P_{n_2}|$ from below using ideas from \cite{ghosh2023number}. Let $Q_n(z):=\prod_{i=1}^n  (z-z_i)$ be a deterministic polynomial; then we say that a root $z_j$ forms an \emph{isolated component} if there exists a ball $\mathcal{B}$ containing $z_j$ such that, 
            \begin{align}\label{isolated}
                 \begin{cases}
                    & z_k \notin \mathcal{B}, \hspace{.57in} \forall k \neq j \\[10pt]
                    &|Q_n(z)| \geq 1,  \hspace{.25in}  \forall z \in\partial\mathcal{B}.
                 \end{cases}
            \end{align}
     We now define for each $1\leq i\leq n,$ the event $T_i=\{X_i\hspace{0.05in}\mbox{forms an isolated component}\}$. Then it follows that 
    \begin{align*}
         \mb{E}\left[ \frac{C\left(\Lambda(R_n)\right)}{n}\right] \geq  \mb{E}\left[ \frac{1}{n}\sum_1^{n_2}  \mathbbm{1}_{T_i}\right] = c_2 \mb{P}(T_1).
    \end{align*}
    We will be done if we can show that $\mb{P}(T_1) > c>0$ for some $c$ independent of $n.$ Towards this, we start by providing a sufficient condition for having an \emph{isolated component} at $X_1$.
    Let $z \in B(X_1,r_{n_2})$, with $r_{n_2}:= \frac{1}{n_2^6}$. Near $X_1$ we expand the random part of the polynomial in the Taylor series to obtain,
    \begin{align}\label{sufficient condition for component}
        |R_n(z)| &= |z-a|^{n_1} \left| P_{n_2}(X_1)+\cdots+ P_{n_2}^{(k)}(X_1)\frac{r_{n_2}^k}{k!}+\cdots+P^{({n_2})}_{n_2}(X_1)\frac{r_{n_2}^{n_2}}{{n_2}!}\right|\nonumber \\
        & \geq (1+\epsilon)^{n_1} \left|P^{'}_{n_2}(X_1)r_{n_2}\right| \left|1- \sum_{k=2}^{n_2}\frac{|P^{({k})}_{n_2}(X_1)\frac{r_{n_2}^{k}}{{k}!}|}{|P^{'}_{n_2}(X_1)\frac{r_{n_2}}{1!}|}\right|.
    \end{align}
    To get an \emph{isolated component}, the quantity in \eqref{sufficient condition for component} needs to be greater than $1$. To guarantee this, we define events $F_1,...F_{{n_2}+1}$ in \eqref{1} as follows.
        \begin{align}\label{1}
           \begin{cases}
                &F_1:=\left \{ \big|{P}^{'}_{n_2}(X_1)\big| \geq c(L)^{2n_2} \right \},\\[.8em]
            &F_k:=\left \{ \Big|\frac{P^{(k)}_{n_2}X_1)\frac{r_{n_2}^k}{k!}}{{P}^{'}_{n_2}(X_1)\frac{r_{n_2}}{1!}}\Big| < \frac{1}{2n_2^2}  \right\}, \quad \textit{for } k =2,...,{n_2},\\[1.2em]
            &F_{{n_2}+1}:=\left \{ \underset{2\leq j \leq {n_2}}{\min}|X_1-X_j| > \frac{1}{n_2^6}\right \},
           \end{cases}
        \end{align}
        with $r_{n_2} = \frac{1}{n_2^6}$. Notice that on these events, we have for all $z\in\partial B(X_1, r_{n_2})$
        \begin{align}\label{choosing c_1,c_2}
            |R_n(z)| \geq (1+\epsilon)^{n_1} c(L)^{2n_2} \frac{1}{2n_2^6} \geq \big((1+\epsilon)^{c_1}c(L)^{c_2}\big)^n \frac{1}{2n_2^6}.
        \end{align}
        Since $\epsilon$ and $c(L)$ are fixed, we can choose $c_1,c_2 > 0$ in such a way that $\left((1+\epsilon)^{c_1}c(L)^{c_2}\right) >1$. This choice substituted back into \eqref{choosing c_1,c_2} ensures a isolated component near $X_1$. Now we use the assumption that the set $L$ is $C^2$ to claim that two zeros can not be close together with high probability. Fix  a root say $X_1,$ and define the event $A:=\left \{ \underset{2\leq j \leq {n_2}}{\min}|X_1-X_j| >\frac{1}{n_2^6}\right \}$. Then from Lemma \ref{distance between zeros} we obtain $\mb{P}(A) \geq \left(1-f\left(\frac{1}{n_2^6}\right)\right)^{(n-1)}$. On the other hand, Lemma \ref{Bound on probability for equilibrium measures} implies that for $C^2$ Jordan arcs we can take $f(t)=C\sqrt{t}$. Plugging this into the estimate for $\mb{P}(A)$, we get the lower bound   
        \begin{align*}
            \mb{P}(A) \geq 1- \frac{C}{\sqrt{{n_2}}}.
        \end{align*}
       We can now get a lower bound on $\mb{P}(T_1)$ in the following way. 
    \begin{align}\label{final1}
         \mb{P}(T_1)\geq\mb{P}\left( T_1 \cap A \right)= \mb{P}\left( T_1 | A \right)\mb{P}(A)\geq \mb{P}\left( T_1 | A \right)- \frac{C}{\sqrt{{n_2}}} \geq \mb{P}\left( \cap_{j=1}^{n_2+1}F_j |A\right) - \frac{C}{\sqrt{{n_2}}}.
    \end{align} 
    We estimate the conditional probability of  $F_1$ given $A$  directly by Lemma \ref{derivative L bound}, where the hypotheses of log moments bounds are satisfied by Lemma \ref{log moment bound for uniform on cicle} to obtain
      \begin{align}\label{lo2}
          \mb{P}(F_1|A) \geq 1- \frac{C}{\sqrt{{n_2}}}.
      \end{align}
      Bounds for events
      $F_k$ for $ k=2,\cdots,n_2$ follows directly from Lemma \ref{higher derivative bounds} with $C_n=1/n_2^6$ and using Markov inequality in the following manner 
      \begin{align} \label{lo1}
\mb{P}\left(\Big|\frac{P^{(k)}_{n_2}X_1)\frac{r_{n_2}^k}{k!}}{{P}^{'}_{n_2}(X_1)\frac{r_{n_2}}{1!}}\Big| \geq \frac{1}{2n_2^2} \Big |A \right)\leq 2n_2^2 \binom{{n_2}-1}{k-1} \left(\frac{1}{n_2^6}\right)^{k-1} \leq \frac{1}{n_2^2}.
      \end{align} 
      Finally notice that $F_{n_2+1} \subset A ,$ therefore  $\mb{P}(F_{n_2+1}|A) =1$. Subbing the estimates \eqref{lo2} and \eqref{lo1} together into \eqref{final1}, we get 
      \begin{align*}
           \mb{P}(T_1) \geq \mb{P}(F_1|A)-\sum_{i=2}^{n_2+1}\mb{P}(F_1|A)^c-\frac{C}{\sqrt{{n_2}}} \geq 1- \frac{C}{\sqrt{{n_2}}}.
      \end{align*} 
    Here $C$ is independent of ${n_2}$. Therefore, we get the desired conclusion for all $n$ large enough.
\end{proof}

\begin{proof}[\textbf{Proof of $(c)$}]
Let $K$ be connected and $c(K)< \frac{1}{4}.$ Since the diameter $d$ of $K$ satisfies $c(K)\geq \frac{d}{4}$, see for instance  \cite{Ransfordpotentialthryincomplexplane},  it follows that $d<1.$ This implies that for any polynomial $p(z) = \prod_{j=1}^{n}(z- a_j)\in\mathscr{P}_n(K), $ we have $|p(z)|< 1$ for all $z\in K$. Hence $K\subset\Lambda_p.$ But since the zeros themselves are contained in $K$ which is assumed connected,  $\Lambda_p$ is connected. Therefore $\mathscr{C}_n(K) = 1$ for all $n,$ which in particular gives $m(K) = 0.$ Suppose $c(K) =\frac{1}{4}$, then the diameter $d$ of $K$ is less than equal to $1$. The case $d<1$ is the same as before, so we consider  $d=1$. In this case, for any polynomial $p \in \mathscr{P}_n(K)$ we have $K \subset \overline{\Lambda_p}$. That is, $\overline{\Lambda_p}$ has exactly one component. If $\Lambda_p$ has more than one component then, by Lemma \ref{touching of lemniscates} there exists a critical point $w$ of $p$ such that $|p(w)|=1.$ Since the diameter is $1$, we have 
\begin{align*}
    |w-z_j|=1 \quad \forall j \in \{1,...,n\}.
\end{align*}
where $z_1,...,z_n$ are the zeros of $p$. By translation, we can assume $w=0$. Now choose any zero say $z_1,$ and consider the ball $B(z_1,1)$. As $0$ is a critical point, $z_j$ for $j\neq 1$ cannot lie in ${B(z_1,1) \cap \partial \mb{D}}$ by the Gauss-Lucas theorem. On the other hand if some $z_k$ is outside $B(z_1,1)$ then $d \geq |z_1-z_k|>1$, which leads to a contradiction. Therefore, $\Lambda_p$ has only one component i.e. $\mathscr{C}_n(K) = 1$, and $m(K)=M(K)=0$.
\end{proof}

\section{Proof of Theorem \ref{capbigg1}}

\begin{proof}[\textbf{Proof of $(a)$}]
  Our proof rests on the \emph{probabilistic method} and the following theorem of Krishnapur, Lundberg, and Ramachandran, see \cite{KLR}.
  
\begin{thm}\label{KLR 23}
  Let $K$ be a compact set with $c(K) > 1.$ Suppose that the equilibrium measure $\nu$ of $K$ satisfies
  \begin{align}
        \nu (B(z, r)) \leq  Cr^{\epsilon}.
    \end{align}
 Consider the sequence of random polynomials defined by $P_n(z):=\prod_1^n(z-X_i)$, where $\{X_i\}^\infty_1$ is a sequence of i.i.d. random variables with law $\nu.$ Then there exist constants $c_0, c_1 > 0$ such that,

\begin{equation}\label{smalllemniscate}
\Lambda_{P_n}\subset \cup_{k=1}^n B(X_k, e^{-c_0n}),    
\end{equation}

holds with probability at least $1-  e^{-c_1n}$ for all large $n.$
   
\end{thm}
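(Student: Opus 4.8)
The plan is to prove the equivalent statement that, with probability at least $1-e^{-c_1 n}$, one has $|P_n(z)|\ge 1$ for \emph{every} $z\notin\bigcup_{k=1}^n B(X_k,e^{-c_0 n})$. Write $\mu_n=\frac1n\sum_{k=1}^n\delta_{X_k}$ for the empirical measure of the zeros, so that $\frac1n\log|P_n(z)|=U_{\mu_n}(z)$. First I would reduce to a fixed window: with $R:=1+\max_{w\in K}|w|$, any $z$ with $|z|\ge R$ has $|z-X_j|>1$ for all $j$, hence $|P_n(z)|>1$, so only $z\in\overline{B(0,R)}$ needs attention. The sole classical input is Frostman's theorem, which in the sign convention of this paper gives $U_\nu(z)\ge\log c(K)=:2\gamma$ for \emph{all} $z\in\C$, and positivity of $\gamma$ is exactly where $c(K)>1$ is used. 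I fix a small constant $r_0\in(0,\tfrac12)$ (pinned down at the end), write $\log_{r_0}t:=\max(\log t,\log r_0)$, and set $\rho(z):=\operatorname{dist}(z,\{X_1,\dots,X_n\})$. The argument rests on two high-probability events.

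The event $\mathcal E_1$ is that $\int\log_{r_0}|z-w|\,d\mu_n(w)\ge\gamma$ for all $z\in\overline{B(0,R)}$. Since $w\mapsto\log_{r_0}|z-w|$ takes values in $[\log r_0,\log 2R]$ with $\nu$-mean at least $U_\nu(z)\ge 2\gamma$, Hoeffding's inequality bounds by $e^{-cn}$ the chance that the $\mu_n$-average drops below $\tfrac32\gamma$ at a single point $z$; as $t\mapsto\log_{r_0}t$ is $\tfrac1{r_0}$-Lipschitz, a union bound over a $\tfrac{\gamma r_0}{2}$-net of $\overline{B(0,R)}$ upgrades this to the uniform statement, giving $\mathbb P(\mathcal E_1)\ge 1-e^{-c_1'n}$ for $n$ large. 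On $\mathcal E_1$ every $z$ with $\rho(z)\ge r_0$ satisfies $U_{\mu_n}(z)=\int\log_{r_0}|z-w|\,d\mu_n(w)\ge\gamma$, hence $|P_n(z)|\ge e^{\gamma n}>1$; this disposes of all $z$ lying outside the radius-$r_0$ balls about the zeros. The event $\mathcal E_2$ controls clustering of the zeros: $\max_{1\le k\le n}\ \sum_{j\ne k:\,|X_k-X_j|<2r_0}\bigl(\log 2+\log\tfrac1{|X_k-X_j|}\bigr)\le\tfrac\gamma4 n$. Conditioning on $X_k=z_0$, the inner sum becomes $\sum_{j\ne k}\psi_{z_0}(X_j)$ with $\psi_{z_0}(w):=\bigl(\log\tfrac2{|z_0-w|}\bigr)\mathbbm{1}_{\{|z_0-w|<2r_0\}}\ge 0$, a sum of i.i.d.\ terms, and a layer-cake estimate using $\nu(B(z_0,s))\le Cs^{\epsilon}$ gives, uniformly in $z_0\in K$, both $\mathbb E\,\psi_{z_0}(X_j)\le C'r_0^{\epsilon}(|\log r_0|+\tfrac1\epsilon)$ and $\mathbb E\,e^{(\epsilon/2)\psi_{z_0}(X_j)}\le 1+C''r_0^{\epsilon/2}$. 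A Chernoff bound then yields $\mathbb P\bigl(\sum_{j\ne k}\psi_{z_0}(X_j)>\tfrac\gamma4 n\ \big|\ X_k=z_0\bigr)\le\exp\bigl(n\bigl(-\tfrac{\epsilon\gamma}{8}+\log(1+C''r_0^{\epsilon/2})\bigr)\bigr)$, which is $\le e^{-cn}$ once $r_0$ is small enough that $\log(1+C''r_0^{\epsilon/2})<\tfrac{\epsilon\gamma}{16}$; a union bound over $k$ gives $\mathbb P(\mathcal E_2)\ge 1-e^{-c_1''n}$.

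On $\mathcal E_1\cap\mathcal E_2$ it remains to treat $z$ with $e^{-c_0 n}\le\rho:=\rho(z)<r_0$; let $X_{k_0}$ realize $\rho$. Using $\log t=\log_{r_0}t-(\log r_0-\log t)^+$, I split $\log|P_n(z)|=\log\rho+\sum_{j\ne k_0}\log_{r_0}|z-X_j|-\sum_{j\ne k_0}(\log r_0-\log|z-X_j|)^+$. The middle sum equals $n\int\log_{r_0}|z-w|\,d\mu_n(w)-\log_{r_0}\rho=n\int\log_{r_0}|z-w|\,d\mu_n(w)-\log r_0\ge\gamma n$ on $\mathcal E_1$ (the $k_0$-term is $\log_{r_0}\rho=\log r_0$ because $\rho<r_0$). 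For the last sum, each $j\ne k_0$ has $|z-X_j|\ge\rho$, so $|X_{k_0}-X_j|\le\rho+|z-X_j|\le 2|z-X_j|$; hence $|z-X_j|<r_0$ forces $|X_{k_0}-X_j|<2r_0$ and $(\log r_0-\log|z-X_j|)^+\le\log\tfrac1{|z-X_j|}\le\log 2+\log\tfrac1{|X_{k_0}-X_j|}$, and since these terms are nonnegative the last sum is at most $\sum_{j\ne k_0:\,|X_{k_0}-X_j|<2r_0}\bigl(\log 2+\log\tfrac1{|X_{k_0}-X_j|}\bigr)\le\tfrac\gamma4 n$ on $\mathcal E_2$. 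Combining, $\log|P_n(z)|\ge-c_0 n+\gamma n-\tfrac\gamma4 n\ge 0$ as soon as $c_0\le\tfrac{3\gamma}{4}$; fixing such $r_0$ and $c_0>0$ and taking $c_1:=\tfrac12\min(c_1',c_1'')$ completes the proof.

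The hard part is reconciling a lower bound on $\log|P_n|$ that must be uniform \emph{down to the exponentially small scale} $e^{-c_0 n}$ with the fact that potentials of empirical measures do not converge uniformly near their atoms. The resolution is structural rather than analytic: because the balls $B(X_k,e^{-c_0 n})$ are excised, the only loss one ever pays is $\log\rho\ge-c_0 n$, which is harmless for $c_0$ small, while everything happening below the fixed scale $r_0$ is charged to $\mathcal E_2$, whose budget $\tfrac\gamma4 n$ can be made arbitrarily small relative to $n$ because $r_0^{\epsilon}(|\log r_0|+\tfrac1\epsilon)\to 0$ as $r_0\to 0$ — and it is precisely the regularity hypothesis $\nu(B(z,r))\le Cr^{\epsilon}$ that makes this quantity simultaneously control the mean and the exponential moments of $\psi_{z_0}$ uniformly in $z_0$. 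I expect the only genuinely technical points to be the uniform-LLN/Hoeffding net argument for $\mathcal E_1$ and the two layer-cake moment bounds for $\psi_{z_0}$ in $\mathcal E_2$, both of which are routine.
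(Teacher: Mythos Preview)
The paper does not actually prove this statement: Theorem~\ref{KLR 23} is quoted as an external result of Krishnapur, Lundberg, and Ramachandran \cite{KLR} and used as a black box in the proof of Theorem~\ref{capbigg1}(a). So there is no ``paper's own proof'' to compare against.

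That said, your argument is correct and complete as a stand-alone proof. The reduction to $\overline{B(0,R)}$ is clean; the use of Frostman in the form $U_\nu\ge\log c(K)=2\gamma>0$ is the right starting point in this sign convention; the truncated-potential event $\mathcal E_1$ via Hoeffding plus a fixed-mesh net is sound because $t\mapsto\log_{r_0}t$ is $\tfrac{1}{r_0}$-Lipschitz and the net has cardinality independent of $n$; and the clustering event $\mathcal E_2$ via a Chernoff bound with exponent $\epsilon/2$ is exactly the place where the hypothesis $\nu(B(z,r))\le Cr^{\epsilon}$ does its work, giving the uniform exponential-moment bound $\mathbb E\,e^{(\epsilon/2)\psi_{z_0}}\le 1+C''r_0^{\epsilon/2}$. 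The decomposition $\log t=\log_{r_0}t-(\log r_0-\log t)^+$ and the comparison $|z-X_j|\ge\tfrac12|X_{k_0}-X_j|$ tie the two events together neatly, and the final arithmetic $-c_0 n+\gamma n-\tfrac{\gamma}{4}n\ge 0$ for $c_0\le\tfrac{3\gamma}{4}$ is right. There is nothing to fix; if anything, you have supplied what the present paper omits.
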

Following the notation used in the above theorem, we claim that there exist constants $c, \alpha > 0$ such that for each $n,$ the event
\[B_n = \left\{\min_{1\leq i < j\leq n}|X_i - X_j|> \frac{c}{n^{\alpha}}\right\}\]
satisfies $\mathbb{P}(B_n)\geq\frac{1}{2}$.   Assuming the claim for now, let us finish the proof of part $(a).$ For $n\in\N,$ define the event $A_n = \left\{\Lambda_{P_n}\subset \cup_{k=1}^n B(X_k, e^{-c_0n})  \}\right\}$. Then by \eqref{smalllemniscate} we have that $\mathbb{P}(A_n)\geq 1-  e^{-c_1n} $ holds for all large $n$. Therefore $\mathbb{P}(A_n\cap B_n)\geq\frac{1}{4}$ for large $n.$ This implies in particular that for all $n$ large, there exists some choice of points $w_{1, n}, w_{2, n},... w_{n,n}$ in $K$ such that the polynomial $p_n(z) = \prod_{k=1}^{n}(z-w_{k, n})$ has $n$ isolated components for $\Lambda_{p_n}$ (since distance between the zeros is at least polynomially decaying in $n,$ while the balls are all of exponentially small radius). It remains to prove the claim. 

\vspace{0.1in}
For the proof of the claim, we first observe that if $z\in K,$ and $t>0,$ we have $\PP(|z - X_j|< t) = \nu(B(z,t)\leq Ct^{\epsilon}$ by hypothesis on $\nu.$ Hence if $i, j\in [n]$ with $i\neq j$, independence of the random variables and simple conditioning yields 
\begin{equation}\label{smalldist}
\PP(|X_i - X_j|< t) = \int_{K}\PP(|z - X_j|< t)d\nu(z)\leq Ct^{\epsilon}.    
\end{equation}
The estimate \eqref{smalldist} along with the union bound gives 
\begin{align*}\label{mindist}
\PP\left(\min_{1\leq i < j\leq n}|X_i - X_j|\geq t \right)&= 1-\sum_{i <j}\PP(|X_i - X_j| < t)\\
&\geq 1 - n^2 C t^{\epsilon}\geq\frac{1}{2}
\end{align*}

if $t = \frac{1}{(2Cn^2)^{\frac{1}{\epsilon}}}.$ This finishes the proof of the claim and hence of part $(a).$
\end{proof}

\begin{proof}[\textbf{Proof of $(b)$}]  This proof must be well known but we couldn't find a suitable reference. We start by recalling that a Fekete $n$-tuple of $K$ is any $n$-tuple $(z_1, z_2, ..., z_n)$ which realizes the supremum

\[\sup_{j, k\leq n : j < k}\left\{\prod |w_j - w_k|^{\frac{2}{n(n-1)}}: w_1, w_2, ... w_n\in K\right\}.\]

For $n \in \mb{N}$, let $\{z_{k,n}\}_{k=1}^{n}$ denote a Fekete $n$-tuple for the compact set $K$. Let $p_n$ denote the corresponding Fekete polynomial of degree $n$ defined by
    \begin{align*}
        p_n(z) = \prod_{k=1}^n (z-z_{k,n}), \quad \quad \forall n \in \mb{N}.
    \end{align*}
    We will prove that the  $\{|p_n| < 1\}$  has $n$ connected components for large enough $n$. The idea is to use Lemma \ref{suffcondn} to show that at each zero $z_{k, n}$ there is an \emph{isolated component} of the lemniscate $\Lambda_{p_n}$. We will first check that $\Big|p^{'}_n(z_{1,n})\Big|$ is exponentially large using properties of the Fekete points. Similar estimates obviously hold at the other zeros. Let $l_{r,n}$ denote the $r^{\textit{th}}$ incomplete polynomial or order $n$ which are defined by
    \begin{align*}
        l_{r,n}(z) := \prod_{i \neq r} \frac{(z-z_{i,n})}{(z_{r,n}-z_{i,n})}.
    \end{align*}
    We know from a property of the Fekete points (\cite[Exercise-$5.5.3$]{Ransfordpotentialthryincomplexplane}) that  $||l_{r,n}||_K \leq 1$. Using this we estimate
    \begin{align}\label{fekete derivative estimate}
        |p_n^{'}(z_{1,n})| =\prod_{i \neq 1} \left| z_{1,n}-z_{i,n}\right| \geq \sup_{z \in K} \left(\prod_{i \neq 1}\left| z-z_{i,n} \right| \right) \geq c(K)^{n-1}, 
    \end{align}
    where the rightmost inequality follows from the fact that the sup norm of a monic polynomial of degree $d$ on a compact $K$ is at least $c(K)^{d}$. Since $c(K)>1$, we get from \eqref{fekete derivative estimate} that $\Big|p^{'}_n(z_{1,n})\Big|$ is exponentially large. Next, a result of Kovari and Pommerenke, c.f., \cite{Kovaripommerenke}, implies that the minimum distance between \emph{any two} Fekete points of a continuum is at least of the order $\dfrac{1}{n^2}$. Hence Lemma \ref{suffcondn} applies here to show the existence of $n$ components.  
\end{proof}

\section{Proof of Proposition \ref{proposition} and Theorem \ref{capeq1M}}

\begin{proof}[\textbf{Proof of proposition \ref{proposition}}]
Let us assume first that $K$ is a \emph{period-$m$ set} with $P_m$ as \emph{generating polynomial}. Let $\mathcal{T}_n(x)$ be the Chebyshev polynomial of degree $n$. We define polynomials $Q_{nm}(z):=\mathcal{T}_n(P_m(z))$ of degree $mn$. Notice that, by construction $Q_{nm}(z) \in \mathcal{P}_n(K)$. Taking the derivative we see that $Q_{nm}^{'}(z)=\mathcal{T}_n^{'}(P_m(z))P_m^{'}(z)$. There are two kinds of critical points. The first kind is the critical points of $P_m$, which we ignore. The second kind is the inverse images of critical points of $\mathcal{T}_n(x)$ under $P_m$, and the critical points of $P_m$, that is $\{z: P_m^{-1}(z)=cos\left(\frac{2\pi}{n}\right), k=1,\cdots,n-1\}$. For all these $(n-1)m$ critical points of $Q_{nm}$, the critical values are $2$. Therefore, by lemma \ref{Components and critical points}, the lemniscate of $Q_{nm}$ has at least $(n-1)m$ many components. Now fixing $m$ and taking limit $n \to \infty$ we get,
   \begin{align}\label{M1}
      \underset{n \to \infty}{\overline{\lim}}   \frac{\mathscr{C}_n(K)}{n} \quad \geq  \quad \underset{n \to \infty}  {\overline{\lim}}  \frac{(n-1)m}{mn} \quad = \quad 1.
    \end{align} 
If $K$ is a \emph{closed lemniscate}, similar arguments show \eqref{M1}. The only modification is that we need to compose the \emph{generating lemniscate} with $z^n+1$ instead of $\mathcal{T}_n$. We proceed to the proof of the stronger result that $\mathscr{C}_n(K)= n$ along a subsequence. Let $Q_m$ be the \emph{generating polynomial} of the lemniscate. We define a sequence of polynomials by
    \begin{align}\label{polyseqforlemniscate}
        P_{nm}(z):= \left(Q_m(z)\right)^n+1,\hspace{0.1in}n\in\N.
    \end{align}
There are two cases. Assume first that $K$ has $m$ components. Then all the critical values of $Q_m$ are greater than $1$ in absolute value. Therefore, for large enough $n$ all the critical values of $P_{nm}, $ would be greater than equal to $1$ in absolute value. By lemma \ref{Components and critical points}, we get $\mathscr{C}_{mn}(K)=mn$ for all $n$ large. Next, assume that $K$ has strictly less than $m$ components. We then order the $(m-1)$ critical values of $Q_m$ in the following way $r_1e^{i\theta_1},\cdots,r_{m-1}e^{i\theta_{m-1}}$, where $r_1 \leq r_2 \leq \cdots \leq r_{m-1}$, and $r_1 \leq 1$. Now consider the sequence of $(m-1)$ tuples $\{(n\theta_1,...,n\theta_{m-1})\}_{n=1}^{\infty}$ mod $2\pi \in [0,2\pi]^{m-1}.$ Given a small $\delta$, we can divide $[0,2\pi]^{m-1}$ into a finite number of small cubes of diameter less than $\delta$. Then, by the pigeonhole principle, there is a subsequence $\mathcal{N} \subset \mb{N}$ for which all of the $(m-1)$ tuples are in the same cube. Now, subtracting the first element of $\mathcal{N}$ from all the other elements, we get a subsequence $\mathcal{N}'$. For any $n_k \in \mathcal{N}'$ the $(m-1)$ tuple $(n_k\theta_1,...,n_k\theta_{m-1})$ mod $2\pi$ is at most $\delta$ distance away from $(0,...,0)$. Therefore we have that 
\begin{align*}
    \Re \left({r_l}^{n_k} e^{in_k\theta_l}\right) \geq 0, \quad  \quad \forall \hspace{.05in}1 \leq l \leq (m-1), k \in \mb{N}.
\end{align*}
This shows along the subsequence $\mathcal{N}'$ all the critical values of the polynomials in \eqref{polyseqforlemniscate} will be greater than or equal to $1$ in modulus, thus concluding the proof.
\end{proof}


\begin{proof}[\textbf{Proof of Theorem \ref{capeq1M}} ]
    We will first prove that $M(K) = 1.$ We make use of the following lemma. 
    
    \begin{Lemma}\label{approximation of lemniscate lemma}
    Let the compact set $K$ be the closure of a bounded Jordan domain with $C^{2}$ boundary. Further, assume that $c(K)=1$. Then, given any $\delta \in (\frac{1}{2},1)$ there exists a monic polynomial $F_n$ of degree $n = n(\delta)$ such that,
    \begin{align}\label{approximating poly equation}
        F_n^{-1}\left(\overline{B(0,\delta)}\right) \subset K^{o},
    \end{align}
where $K^{o}$ denotes the interior of $K.$
\end{Lemma}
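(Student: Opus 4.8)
The plan is to take $F_n$ to be the $n$-th Faber polynomial of $K$. Since $c(K)=1$, the exterior conformal map $\phi\colon\widehat{\C}\setminus K\to\widehat{\C}\setminus\overline{\D}$ normalized by $\phi(\infty)=\infty$ satisfies $\phi(z)=z+O(1)$ as $z\to\infty$, so the polynomial part $\Phi_n$ of the Laurent expansion of $\phi(z)^n$ at infinity is monic of degree $n$, and $E_n:=\Phi_n-\phi^n$ is analytic on $\widehat{\C}\setminus K$ with $E_n(\infty)=0$. Writing $\psi=\phi^{-1}$, the standard representation $\Phi_n(z)=\frac{1}{2\pi i}\oint_{|w|=R}\frac{w^n\psi'(w)}{\psi(w)-z}\,dw$ (valid for $R>\max(1,|\phi(z)|)$), after shrinking $R$ to $1$ and collecting the residue $\phi(z)^n$ at $w=\phi(z)$, gives for $z\notin K$ that $E_n(z)=\frac{1}{2\pi i}\oint_{|w|=1}\frac{w^n\psi'(w)}{\psi(w)-z}\,dw$, and $E_n$ extends continuously to $\partial K$. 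Because $|\phi|=1$ on $\partial K$ and $|\phi|>1$ on $\widehat{\C}\setminus K$, we have $|\phi|^{n}\ge 1$ on the closed set $\widehat{\C}\setminus K^{o}$; hence if $\eta_n:=\sup_{\widehat{\C}\setminus K^{o}}|E_n|\to 0$, then for $n=n(\delta)$ with $\eta_n<1-\delta$ we get $|\Phi_n(z)|\ge |\phi(z)|^n-\eta_n>\delta$ for all $z\in\widehat{\C}\setminus K^{o}$, i.e. $\Phi_n^{-1}\!\big(\overline{B(0,\delta)}\big)=\{|\Phi_n|\le\delta\}\subset K^{o}$. Thus $F_n:=\Phi_n$ works, and the whole lemma reduces to the claim $\eta_n\to 0$. (That $\delta<1$ is essential: $\{|\Phi_n|\le 1\}$ has capacity $1$, while every compact subset of $K^{o}$ has capacity strictly below $c(K)=1$, so a strict inclusion is impossible at level $1$; any $\delta<1$, not just $\delta>\tfrac12$, would do.)

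The decay $\eta_n\to 0$ is where the $C^2$ hypothesis on $\partial K$ enters, and I expect it to be the only real obstacle. By the Kellogg--Warschawski theorem a $C^2$ Jordan boundary forces $\psi$ to extend to a $C^{1,\alpha}$ diffeomorphism of $\{|w|\ge1\}$ onto $\widehat{\C}\setminus K^{o}$ with $\psi'\neq 0$ on $|w|=1$, for every $\alpha\in(0,1)$. I would then fix $z\notin K$, put $\zeta=\phi(z)$ (so $|\zeta|>1$), and split the Cauchy kernel $\dfrac{\psi'(w)}{\psi(w)-\psi(\zeta)}=\dfrac{1}{w-\zeta}+h_\zeta(w)$; the $C^{1,\alpha}$ bound on $\psi'$ makes $h_\zeta\in L^1(|w|=1)$ with at worst an $O(|w-\zeta|^{\alpha-1})$ singularity at $w=\zeta$, with constants uniform for $\zeta$ near the unit circle. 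Since $\oint_{|w|=1}w^n(w-\zeta)^{-1}\,dw=0$ for $|\zeta|>1$, this leaves $E_n(z)=\frac{1}{2\pi i}\oint_{|w|=1}w^n h_\zeta(w)\,dw$, and a routine splitting of the contour into a short arc about $\zeta/|\zeta|$ plus integration by parts off it yields $|E_n(z)|\le Cn^{-\alpha/2}$ with $C$ independent of $z$; letting $z\to\partial K$, using continuity of $E_n$ there (the bound being trivial where $|\phi(z)|$ is bounded away from $1$), gives $\eta_n=O(n^{-\alpha/2})$. This is essentially the classical statement that $\|\Phi_n-\phi^n\|_{\partial K}\to 0$ for Jordan domains with smooth enough boundary (with geometric decay when $\partial K$ is analytic), so in the write-up I would likely just invoke it from the Faber-polynomial literature rather than reprove it.

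It is worth recording why one takes this route rather than the obvious one of perturbing the equilibrium measure $\nu$ of $K$: producing point masses $\mu_n\to\nu$ with $\{|p_n|\le\delta\}\subset K^{o}$ would demand the potential discrepancy $\|U_{\mu_n}-U_\nu\|$ on $\widehat{\C}\setminus K^{o}$ to be $o(1/n)$ --- far stronger than anything a natural node configuration provides --- and in any case Fekete polynomials are useless here, their zeros lying on $\partial K\not\subset K^{o}$. One could instead work with Chebyshev or Fekete polynomials of a slightly shrunk $C^2$ domain $K'\Subset K^{o}$ together with lower Bernstein--Walsh bounds, but that forces a quantitative competition between $\log c(K')\to 0^{-}$ and $\min_{\widehat{\C}\setminus K^{o}}g_{K'}\to 0^{+}$ as $K'\uparrow K^{o}$, with no evident winner. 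Using the Faber polynomials of $K$ itself sidesteps this, precisely because $\|\Phi_n-\phi^n\|_{\partial K}\to 0$ honestly at a rate fixed only by the regularity of $\partial K$ --- which is the technical heart of the argument.
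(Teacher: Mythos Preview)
Your proposal is correct and follows essentially the same approach as the paper: both take $F_n$ to be the $n$-th Faber polynomial of $K$ and reduce the lemma to the uniform decay $\sup_{\overline{K^c}}|F_n-\phi^n|\to 0$, which the paper simply cites from Suetin (with the sharper rate $O(\log n/n)$ for $C^2$ boundary) rather than sketching as you do. Your final deduction is slightly more direct---you bound $|F_n|\ge|\phi|^n-\eta_n>\delta$ on all of $\widehat{\C}\setminus K^{o}$ at once, whereas the paper first locates the zeros of $F_n$ in $K^{o}$, then bounds $|F_n|$ only on $\partial K$, and closes with a connectedness argument---but this is a cosmetic difference, not a different route.
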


\begin{proof}[Proof of Lemma \ref{approximation of lemniscate lemma}]
    We will show that the Faber polynomials (defined below) satisfy the criterion.
    Let $\Phi$ be the exterior conformal map from $K^c \cup \infty$ onto the exterior of the unit disc satisfying $\Phi(\infty) = \infty$ and $\Phi^{'}(\infty)>0$. Since the capacity of $K$ is $1$, we can write $\Phi(z)= z+c_0+\frac{c_{-1}}{z}+\cdots $, in a neighbourhood of $\infty$. Since $\partial K$ is $C^2,$ the map $\Phi$ extends to a homeomorphism of the corresponding boundaries. The Faber polynomials $F_n$ of degree $n$ are defined to be the polynomial part of $\Phi^{n}$, that is 
    \begin{align*}
        \Phi^{n}= \left(z+c_0+\frac{c_{-1}}{z}+\cdots\right)^n= \underbrace{z^n+nc_0z^{n-1}+\cdots }_{F_n}+\mathcal{O}\left( \frac{1}{z} \right).
    \end{align*}
    \noindent  When the boundary is $C^2$, we have the following estimate, c.f. \cite[Theorem-$1.1$]{seriesinfaberpolynomialSuetin}
    \begin{align}\label{faber poly estimate}
        |F_n(z)-\Phi^n(z)| \leq C_0 \frac{\log n}{n},
    \end{align}
    uniformly for $z \in K^c\cup\partial K$. For our purpose this estimate is sufficient. We first show that all the zeros of $F_n$ (for all $n$ large) are strictly inside $K$.
    Let $z_0$ be a zero of $F_n$ that is outside $K$, then from \eqref{faber poly estimate} we have for $n$ large
    \begin{align*}
        |\Phi^n(z_0)| =  |F_n(z_0)-\Phi^n(z_0)| \leq C_0 \frac{\log n}{n} <1.
    \end{align*}
    This contradicts the fact that $\Phi$ maps the exterior of $K$ to the exterior of the disk. To show the condition in \eqref{approximating poly equation}, it is enough to show that for some $n$ large,
     \begin{align}\label{delta condition}
        \inf\{ |F_n(z)| : z \in \partial K\} > \delta.
    \end{align}
    Indeed, suppose that \eqref{delta condition} is satisfied and some component of $F_n^{-1}\left(\overline{B(0,\delta)}\right)$ goes outside of $K$. Then one of the following two cases occurs. Either this component is disjoint from $K$ in which case it will contain a zero of $F_n$. This contradicts the fact that all zeros of $F_n$ are strictly inside $K$. The alternative is that this component intersects $K$ non trivially. But then the estimate \eqref{delta condition}  will be violated. This proves that $F_n^{-1}\left(\overline{B(0,\delta)}\right)\subset K^{o}.$  Finally, the estimate \eqref{delta condition} also follows from \eqref{faber poly estimate} in the following manner.
    \begin{align*}
        |F_n(z)|\geq \left|\Phi^n(z)\right|-|F_n(z)-\Phi^n(z)| \geq 1- C_0 \frac{\log n}{n} > \delta,
    \end{align*}
    for all $z \in \partial K$ and $n$ large enough. This proves the lemma.
\end{proof}

With the lemma proved, we continue with the proof of the Theorem. Our proof crucially uses the following polynomial $E_n$ first considered by Erd\"os, Herzog, and Piranian.
    \begin{align}\label{EHP polynomials}
        E_n(z):= \frac{(z^n+1)(z-1)^2}{(z-e^{i\pi/n})(z-e^{-i\pi/n})}.
    \end{align}
    Note that $E_n$ is obtained from the polynomial $z^n +1$ by replacing the two zeros closest to $z=1$ by two zeros at $z=1.$ In particular, all the zeros of $E_n$ are on the unit circle and since it is a double zero, $z=1$ is a critical point of $E_n.$ Erd\"os et. al. in in \cite[Theorem $7$]{EHP} showed that for all $n$ large, $\overline{\Lambda_{E_n}}$ has exactly $(n-1)$ components. Hence, $\Lambda_{E_n}$ also has exactly $(n-1)$ components.
    Let $\{\beta_{n,j}\}_{j=1}^{n-1}$ be the critical points of $E_n$, labelled so that $\beta_{n,1}=1$ for all $n$. By Lemma \ref{Components and critical points}, we have that all critical values are greater than equal to $1$ except for $\beta_{n,1}=1$ where the critical value is $0$. But Lemma \ref{touching of lemniscates} implies that no critical value can be $1$ in modulus, therefore 
    \begin{align}\label{lower bound c_n}
        c_n =\min \Big\{|E_n(\beta_{n,j})| : {j \in \{2,\cdots,n\}} \Big\}>1 \quad \quad \forall n \in \mb{N}.
    \end{align}
    Since the $n$th roots of $-1$ are dense on the unit circle, some zero of $E_n$ lies near $-1$. By the truth of Sendov's conjecture (c.f. \cite{sendovconjecturetao}), this in turn implies that there is always a critical point of $E_n$ in $B(-1,1) \cap \mb{D}$ for all $n$ large. Denote this critical point by $\beta_{n,j_0}$. Since $\beta_{n,j_0}$ lies away from $1$, a crude bounds yields 
    \begin{align}\label{upper bound c_n}
        c_n \leq |E_n(\beta_{n,j_0})| \leq 32.
    \end{align}
 From the estimates in \eqref{lower bound c_n} and \eqref{upper bound c_n} it follows that,
 \begin{align*}
     \delta_n := \left(\frac{1}{c_n}\right)^{\frac{1}{n}} <1 \quad  \mbox{and} \quad  \lim_{n \to \infty } \delta_n = 1.
 \end{align*}
 Fixing this sequence $\delta_n,$ we define the $\delta_n$-scaled $E_n$ polynomials as
 \begin{align}
     E_{\delta_n,n}(z) := \delta_n^{n}E_n\left(\frac{z}{\delta_n} \right) = \frac{(z^n+\delta_n^n)(z-\delta_n)^2}{(z-\delta_ne^{i\pi/n})(z-\delta_ne^{-i\pi/n})}.
 \end{align}
 All the zeros of the polynomial $E_{\delta_n,n}$ lie on $\partial B(0,\delta_n),$ and using Proposition \ref{Scaling of zeros and corresponding critical points}, it is easy to see that one critical value is $0$. All the other critical values are greater than or equal to $1$. Therefore, by Lemma \ref{Components and critical points} we have that $\Lambda_{E_{\delta_n,n}}$  has exactly $(n-1)$ components. Let us choose $n_0$ large, such that $\delta_{n_0}$ is close to $1$. From Lemma \ref{approximation of lemniscate lemma}, we get a monic polynomial $F_n$ such that \eqref{approximating poly equation} holds with $\delta_{n_0}$, where $n$ depends on $\delta_{n_0}$. Now consider the polynomial defined by
 \begin{align*}
     Q(z)=E_{\delta_{n_0},n_0}\left(F_n(z)\right).
 \end{align*}
 We claim that $Q\in\mathscr{P}_{nn_0}(K).$ It is clear that $Q$ is monic of degree $nn_0.$ We have to show that all the zeros of $Q$ are inside $K$. Indeed, let $Q(w)=0.$ Then $|F_n(w)|=\delta_{n_0},$ which in turn implies by \eqref{approximating poly equation} that $w \in K$. This proves the claim. We will next show that $\Lambda_Q$ has a rather large number of connected components. Differentiating $Q$ we get, 
 \begin{align}\label{derivQ}
      Q^{'}(z)=E^{'}_{\delta_{n_0},n_0}\left(F_n(z)\right)F^{'}_n(z).
 \end{align}
 We observe from \eqref{derivQ} that there are three kinds of critical points of $Q$. The first kind are critical points of $F_n(z)$, the second kind are $F_n^{-1}(1)$ both of which we discard. The third kind which we will keep is the set $\mathcal{W}:=\big\{z : F_n(z)= \delta_{n_0} \beta_{n_0,j} , j \in \{2,\cdots,n\}\big\}$. Notice that $|\mathcal{W}|=n(n_0-2).$ For $z \in \mathcal{W},$ the corresponding critical value is
 \begin{align}\label{local001}
    |Q(z)|=|E_{\delta_{n_0},n_0}\left(F_n(z)\right)|=|E_{\delta_{n_0},n_0}\left(\delta_{n_0}\beta_{n_0,j}\right)| \geq 1.
 \end{align}
 Using \eqref{local001} with Lemma \ref{Components and critical points}, we see $\Lambda(Q)$ has at least $n(n_0-2)$ components. We can repeat this procedure for larger and larger $n_0$, and taking the limit we obtain,
 \begin{align*}
      \underset{n \to \infty}{\overline{\lim}}   \frac{\mathscr{C}_n(K)}{n} \geq \underset{n_0 \to \infty}{\overline{\lim}} \frac{n(n_0-2)}{nn_0} =1.  
 \end{align*}
This proves that $M(K) = 1.$ 

\vspace{0.1in}

\noindent We will next show that $m(K)\geq\frac{1}{2}.$ The proof is inspired by ideas from \cite{ghosh2023number} with some simple modification. Let $\nu$ denote the equilibrium measure of $K.$ Consider a sequence of random polynomials $P_n(z) = \prod_{j=1}^{n}(z- X_j),$ where $\{X_j\}$ are i.i.d. random variables with law $\nu$. Our strategy will be to show that each $X_j$ where $1\leq j\leq n,$ forms an \emph{isolated component} with probability nearly $\frac{1}{2}$. Once proven, this last fact will yield $\underset{n \to \infty}{\underline{\lim}}\mb{E}\left(\dfrac{\mathcal{C}_n}{n}\right)\geq\frac{1}{2}.$ As in the proof of \ref{capless1} $(b)$, we can then conclude that $m(K)\geq\frac{1}{2}$. Here, to keep the notation simple, we have denoted $\mathcal{C}_n:=\mathcal{C}(\Lambda_{P_n})$. We will once again use Lemma \ref{suffcondn} to show the presence of isolated components. 

\textbf{Step $1$:} We show that for small $\epsilon> 0,$ one has $|P_n^{'} (X_1)|\geq \exp(n^{\frac{1}{2} - \epsilon})$ with probability nearly $\frac{1}{2}.$ Indeed,
\begin{align*}
\mb{P}\left(\log|P_n^{'}(X_1)|\geq n^{\frac{1}{2} - \epsilon} \right)& = \int_{K} \mb{P}\left(\sum_{j=2}^{n}\log|z - X_j|\geq n^{\frac{1}{2} - \epsilon}\right)d\nu(z)\\
&= \int_{K} \mb{P}\left(\frac{1}{\sqrt{n}}\sum_{j=2}^{n}\log|z - X_j|\geq n^{ - \epsilon}\right)d\nu(z)
\end{align*}
Since $c(K) = 1,$  we have $\mb{E}(\log|z- X_1|) = U_{\mu}(z) = 0$ by Frostman's Theorem.  Note also that since $\partial K$ is $C^2$ smooth, the equilibrium measure $\nu$ is sufficiently regular by Lemma \ref{Bound on probability for equilibrium measures} so that $\log|z- X_1|$ posseses moments of order upto four, which are uniformly bounded for $z\in K.$ Now we can use Berry-Esseen \ref{BE} as before and show that 
$\mb{P}\left(\log|P_n^{'}(X_1)|\geq n^{\frac{1}{2} - \epsilon}\right)\geq \frac{1}{2} - \frac{C}{n^{\epsilon}}.$

\vspace{0.1in}

\textbf{Step $2$:} The second step is to note that $\mb{P}\left(\min_{i\neq j}|X_i - X_j|\geq\frac{1}{n^3} \right)\geq 1 - \frac{C}{n}.$ This proof is also a consequence of Lemma \ref{Bound on probability for equilibrium measures} and the reader is referred to earlier arguments.

These two steps guarantee by Lemma \ref{suffcondn} that $X_1$ forms an isolated component with probability at least $\frac{1}{2} - \frac{C}{n^{\epsilon}}.$ Hence $\mb{E}(\mathcal{C}_n)\geq \dfrac{n}{2} - n^{1-\epsilon}.$ Since $\epsilon > 0$ was arbitrary, this shows $\underset{n \to \infty}{\underline{\lim}}\mb{E}\left(\dfrac{\mathcal{C}_n}{n}\right)\geq\frac{1}{2}$. Hence the proof.

\end{proof}

%




\bibliographystyle{siam}
\bibliography{ref}

\begin{thebibliography}{10}

\bibitem{jordancurvetheorem}
{\sc J.~W. Alexander}, {\em A proof of {J}ordan's theorem about a simple closed
  curve}, Ann. of Math. (2), 21 (1920), pp.~180--184.

\bibitem{lemofalgBF}
{\sc I.~Bauer and F.~Catanese}, {\em Generic lemniscates of algebraic
  functions}, Math. Ann., 307 (1997), pp.~417--444.

\bibitem{Bilingsleyconvergenceofprobabilitymeasures}
{\sc P.~Billingsley}, {\em Convergence of probability measures.}, John Wiley \&
  Sons, Inc., New York-London-Sydney,, 1968.

\bibitem{polylemtreesbraidsFM}
{\sc F.~Catanese and M.~Paluszny}, {\em Polynomial-lemniscates, trees and
  braids}, Topology, 30 (1991), pp.~623--640.

\bibitem{asymptoticofchebyshev}
{\sc J.~S. Christiansen, B.~Simon, and M.~Zinchenko}, {\em Asymptotics of
  {C}hebyshev polynomials. {IV}. {C}omments on the complex case}, J. Anal.
  Math., 141 (2020), pp.~207--223.

\bibitem{durrett2019probability}
{\sc R.~Durrett}, {\em Probability: Theory and Examples}, Cambridge Series in
  Statistical and Probabilistic Mathematics, Cambridge University Press, 2019.

\bibitem{Twodimshapeskhavinson}
{\sc P.~Ebenfelt, D.~Khavinson, and H.~S. Shapiro}, {\em Two-dimensional shapes
  and lemniscates}, in Complex analysis and dynamical systems {IV}. {P}art 1,
  vol.~553 of Contemp. Math., Amer. Math. Soc., Providence, RI, 2011,
  pp.~45--59.

\bibitem{EHP}
{\sc P.~Erd\H{o}s, F.~Herzog, and G.~Piranian}, {\em Metric properties of
  polynomials}, J. Analyse Math., 6 (1958), pp.~125--148.

\bibitem{ghosh2023number}
{\sc S.~Ghosh}, {\em On the number of components of random polynomial
  lemniscates, arxiv $2306.10795$}, 2023.

\bibitem{TopolofalgvarKKP}
{\sc V.~Kharlamov, A.~Korchagin, G.~Polotovski\u{\i}, and O.~Viro}, eds., {\em
  Topology of real algebraic varieties and related topics}, vol.~173 of
  American Mathematical Society Translations, Series 2, American Mathematical
  Society, Providence, RI, 1996.
\newblock Dedicated to the memory of Dmitri\u{\i} Andreevich Gudkov, Advances
  in the Mathematical Sciences, 29.

\bibitem{Kovaripommerenke}
{\sc T.~K\"{o}vari and C.~Pommerenke}, {\em On the distribution of {F}ekete
  points}, Mathematika, 15 (1968), pp.~70--75.

\bibitem{KLR}
{\sc M.~Krishnapur, E.~Lundberg, and K.~Ramachandran}, {\em Inradius of random
  lemniscates, arxiv $2301.13424$}, 2023.

\bibitem{chebyshevpolynomials}
{\sc J.~C. Mason and D.~C. Handscomb}, {\em Chebyshev polynomials}, Chapman \&
  Hall/CRC, Boca Raton, FL, 2003.

\bibitem{PommDeriv}
{\sc C.~Pommerenke}, {\em On the derivative of a polynomial}, Michigan Math.
  J., 6 (1959), pp.~373--375.

\bibitem{metricpropertiespommerekee}
\leavevmode\vrule height 2pt depth -1.6pt width 23pt, {\em On metric properties
  of complex polynomials}, Michigan Math. J., 8 (1961), pp.~97--115.

\bibitem{Ransfordpotentialthryincomplexplane}
{\sc T.~Ransford}, {\em Potential theory in the complex plane}, vol.~28 of
  London Mathematical Society Student Texts, Cambridge University Press,
  Cambridge, 1995.

\bibitem{SaffTotikLogpotential}
{\sc E.~B. Saff and V.~Totik}, {\em Logarithmic potentials with external
  fields}, vol.~316 of Grundlehren der mathematischen Wissenschaften
  [Fundamental Principles of Mathematical Sciences], Springer-Verlag, Berlin,
  1997.
\newblock Appendix B by Thomas Bloom.

\bibitem{seriesinfaberpolynomialSuetin}
{\sc P.~K. Suetin}, {\em Series in {F}aber polynomials and some of their
  generalizations}, in Current problems in mathematics, {V}ol. 5 ({R}ussian),
  Itogi Nauki i Tekhniki, Akad. Nauk SSSR Vsesojuz. Inst. Nau\v{c}n. i Tehn.
  Informacii, Moscow, 1975, pp.~73--140.

\bibitem{sendovconjecturetao}
{\sc T.~Tao}, {\em Sendov's conjecture for sufficiently-high-degree
  polynomials}, Acta Math., 229 (2022), pp.~347--392.

\bibitem{toticeqilibriummeasure}
{\sc V.~Totik}, {\em Multiplicity of zeros of polynomials}, J. Approx. Theory,
  267 (2021), pp.~Paper No. 105594, 19.

\bibitem{jctfirstproof}
{\sc O.~Veblen}, {\em Theory on plane curves in non-metrical analysis situs},
  Trans. Amer. Math. Soc., 6 (1905), pp.~83--98.

\end{thebibliography}

\end{document}